\documentclass[a4paper,10pt]{amsart}
\usepackage[utf8]{inputenc}

% file maxcofgrp.tex

% Packages
\usepackage{amsmath}
\usepackage{amstext}
\usepackage{amssymb}
\usepackage{amsthm}
\usepackage{enumerate}

% Mathtimes packages

%\usepackage[T1]{fontenc}
%\usepackage{textcomp}
%\renewcommand{\rmdefault}{ptm}
%\usepackage[scaled=0.92]{helvet}
%\usepackage[mtphrb,subscriptcorrection,nofontinfo]{mtpro2}

% Sort of margins and page length

% Forced hyphenations

\hyphenation{na-tu-ral}

\hyphenation{e-qui-va-lence}

% Operators
    \DeclareMathOperator{\cov}{cov}
    \DeclareMathOperator{\OD}{OD}
    
    \DeclareMathOperator{\WO}{WO}

    \DeclareMathOperator{\dom}{dom}

  %%June 14, 2005
 \renewcommand{\L}{L}

% Shorthand math

    \newcommand{\card}[1]{\vert #1\vert}
    \newcommand{\seq}[1]{\langle #1 \rangle }

% Shorthand blackboard bold (and more)

\def\Fin{\operatorname{Fin}}

\usepackage{amsmath}
\usepackage{amssymb}
\usepackage{amsthm}

\usepackage{enumerate}

\theoremstyle{plain}% default
\newtheorem{thm}{Theorem}[section]
\newtheorem{lemma}[thm]{Lemma}

\newtheorem{cor}[thm]{Corollary}
\newtheorem*{claim}{Claim}
\newtheorem*{fact}{Fact}
\newtheorem*{remark}{Remark}
\theoremstyle{definition}
\newtheorem{definition}{Definition}[section]

\newtheorem{quest}{Question}[section]

%opening
\title{Coanalytic ultrafilter bases}

\author{Jonathan Schilhan}
\thanks{\emph{Acknowledgments:} The author would like to thank the Austrian Science Fund, FWF, for generous support through START-Project Y1012-N35.}

\begin{document}

\begin{abstract}
 We study the definability of ultrafilter bases on $\omega$ in the sense of descriptive set theory. As a main result we show that there is no coanalytic base for a Ramsey ultrafilter, while in $L$ we can construct $\Pi^1_1$ P-point and Q-point bases. We also show that the existence of a $\mathbf\Delta^1_{n+1}$ ultrafilter is equivalent to that of a $\mathbf\Pi^1_n$ ultrafilter base, for $n \in \omega$. Moreover we introduce a Borel version of the classical ultrafilter number and make some observations.
\end{abstract}
\maketitle

\section{Introduction}

This paper follows the line of many papers studying the definability, in the sense of descriptive set theory, of certain combinatorial subsets of the real line such as mad families \cite{Miller1989},\cite{Fischer2013},\cite{Raghavan2012},
independent families \cite{Miller1989},\cite{Brendle2019}, maximal eventually different families \cite{Schrittesser2018}, maximal cofinitary groups \cite{Fischer2017}, maximal orthogonal families of measures \cite{Fischer2012} or maximal towers and inextendible linearly ordered towers \cite{Fischer2018}. In this paper we will study the definability of ultrafilters and more specifically ultrafilter bases. Filters will always live on $\omega$ and contain all cofinite sets. Thus a filter is a subset of $\mathcal{P}(\omega)$ and we can study its definability. It is well known that an ultrafilter can neither have the Baire property nor be Lebesgue measurable.
This already rules out the existence of analytic ultrafilter generating sets as the generated filter will also be analytic and thus have the Baire property. But this still leaves open the possibility of a coanalytic ultrafilter base since a priori the generated set will only be $\mathbf\Delta^1_2$. Recall that for $x,y \in [\omega]^\omega$ we write $x \subseteq^* y$ whenever $x \setminus y$ is finite. An ultrafilter $\mathcal{U}$ is called a \emph{P-point} if for any countable $\mathcal{F} \subseteq \mathcal{U}$, there is $x \in \mathcal{U}$ so that $\forall y \in \mathcal{F} (x \subseteq^* y)$. $\mathcal{U}$ is a \emph{Q-point} if for any partition $\langle a_n : n \in \omega \rangle$ of $\omega$ into finite sets $a_n$, there is $x \in \mathcal{U}$ so that $\forall n \in \omega (\vert x \cap a_n\vert \leq 1)$. A \emph{Ramsey} ultrafilter is an ultrafilter that is both a P- and a Q-point. A more commonly known and equivalent definition for Ramsey ultrafilters $\mathcal{U}$ is that for any coloring $c \colon [\omega]^2 \to 2$, there is $x \in \mathcal{U}$ so that $c$ is homogeneous on $x$, i.e. $c \restriction [x]^2$ is constant. In fact we will show in Section 3 that:  

\begin{thm}\label{pi11ppoint}
 There is a $\Pi^1_1$ base for a P-point in the constructible universe $L$. 
\end{thm}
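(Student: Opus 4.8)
The plan is to perform, inside $L$, a transfinite recursion of length $\omega_1$ producing the base $\mathcal{B}=\{x_\alpha:\alpha<\omega_1\}$, and to arrange the recursion so carefully that ``$x\in\mathcal{B}$'' reduces to the well-foundedness of a relation that can be read off from $x$ in a Borel way, which is what will make $\mathcal{B}$ lightface $\Pi^1_1$. Throughout we use that in $L$ we have $2^{\aleph_0}=\aleph_1$, the canonical $\Sigma^1_2$-good well-ordering $<_L$ of the reals, and a parameter-free book-keeping of length $\omega_1$ assigning to each stage either a real $y$ (to be decided into the generated filter) or a code for a countable subfamily of the base built so far (to be assigned a pseudo-intersection), each task appearing cofinally often.

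At stage $\alpha$ we will have ensured that $\mathcal{B}_\alpha:=\{x_\beta:\beta<\alpha\}$ is a countable $\subseteq^*$-directed family with the finite intersection property, i.e.\ a P-filter base. We take a pseudo-intersection of $\mathcal{B}_\alpha$ (which exists since $\mathcal{B}_\alpha$ is countable and $\subseteq^*$-directed) and shrink it so that it is almost contained in the members prescribed by the current P-point task and is contained in $y$ or in $\omega\setminus y$ for the current ultrafilter task; any such choice keeps $\mathcal{B}_{\alpha+1}$ a P-filter base. Routine arguments then show that the filter $\mathcal{U}$ generated by $\mathcal{B}=\bigcup_\alpha\mathcal{B}_\alpha$ is a P-point: it is an ultrafilter because every real gets decided, and it is a P-point because any countable $\mathcal{F}\subseteq\mathcal{U}$ is refined by finitely many members of $\mathcal{B}$ for each element of $\mathcal{F}$, hence by a countable subfamily of $\mathcal{B}$, which is handled at some stage and so receives a pseudo-intersection lying in $\mathcal{B}$.

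The real point is to make $\mathcal{B}$ coanalytic, and here the combinatorial choice above is not pinned down by taking $x_\alpha$ to be $<_L$-least: instead, using that the admissible choices at stage $\alpha$ form a set of size continuum, we select $x_\alpha$ so that it moreover encodes, through a fixed Borel scheme for coding a real into a pseudo-intersection of a given decreasing sequence, a real $e_\alpha$ that codes a well-founded structure --- namely (the transitive collapse of) the level $L_{\gamma_\alpha}$ of $L$, where $\gamma_\alpha$ is least such that $L_{\gamma_\alpha}\models\mathsf{ZF}^-$, $\gamma_\alpha>\sup_{\beta<\alpha}\gamma_\beta$, and $\langle x_\beta:\beta<\alpha\rangle\in L_{\gamma_\alpha}$ --- together with a distinguished ordinal (namely $\alpha$) and the book-keeping restricted to $\alpha$. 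Since $\gamma_\alpha$ and $e_\alpha$ depend only on data produced before stage $\alpha$, this is a legitimate recursion, and the strictly increasing choice of $\gamma_\alpha$ guarantees that each $L_{\gamma_\beta}$ with $\beta<\alpha$ is an element of $L_{\gamma_\alpha}$, so that $L_{\gamma_\alpha}$ reconstructs the whole run of the recursion below stage $\alpha$. One then checks, by Shoenfield-style absoluteness, that the recursion rule --- which asks for the unique real coding a prescribed target relative to $\langle x_\beta:\beta<\alpha\rangle$ and having the (Borel) combinatorial property --- is absolute between $L$ and every $L_\gamma\models\mathsf{ZF}^-$ containing the relevant initial segment. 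Consequently $x\in\mathcal{B}$ if and only if: reading $e$ off $x$ via the coding scheme and a structure $N$ with a distinguished ordinal off $e$, the membership relation of $N$ is well-founded --- this is the $\Pi^1_1$ clause, with no existential quantifier, because the certificate is recovered from $x$ itself --- and, in addition, $N$ is extensional, models $\mathsf{ZF}^- + V=L$ with no $\mathsf{ZF}^-$-level strictly between its distinguished ordinal and its ordinal height, and, running $N$'s internal recursion up to the distinguished stage and feeding the result together with the canonical code of $N$ into the fixed coding scheme, one recovers exactly $x$; these last clauses are arithmetic in $x$ once $N$ is fixed. G\"odel condensation (every such well-founded $N$ is some $L_\gamma$) together with the absoluteness just mentioned give both implications: if $x=x_\alpha$ then $N=L_{\gamma_\alpha}$ witnesses membership, and conversely any $N$ passing the test correctly computes an initial segment of the genuine recursion at the end of which $x$ is produced.

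The step I expect to be the main obstacle is the compatibility of the two roles forced on $x_\alpha$: it must be an (almost) pseudo-intersection of all earlier base elements, it must decide the current real, and it must still carry the self-certifying code $e_\alpha$, while the decoding is performed relative to the recursively reconstructed earlier sequence so that no circularity arises. Designing the pseudo-intersection coding scheme so that it is unambiguous and robust under the ultrafilter refinement step, and pinning down exactly which absoluteness is needed so that the displayed $\Pi^1_1$ characterization is correct in both directions, is where essentially all the work lies; the P-point book-keeping itself is standard, and the Q-point variant of the theorem is obtained by replacing the P-point task with the task of thinning out along a given partition of $\omega$ into finite sets, the definability argument being unchanged.
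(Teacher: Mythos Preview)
Your outline is the right architecture --- this is exactly Miller's technique --- but two points deserve comment, one a simplification and one a genuine gap.

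\textbf{Simplification.} The paper does not book-keep P-point tasks at all. It builds a \emph{tower}: a $\subseteq^*$-decreasing $\omega_1$-sequence $\langle x_\xi\rangle$ with $x_{\alpha}\subseteq^* x_\xi$ for every $\xi<\alpha$. A tower that generates an ultrafilter is automatically a P-point base, since any countable subfamily of the generated filter is refined by a tail of the tower, and the next element of the tower is a pseudointersection. So your separate ``P-point task'' scheduling is unnecessary; just diagonalize against $\langle y_\alpha\rangle$ while building a tower.

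\textbf{The coding scheme.} You correctly flag that ``designing the pseudo-intersection coding scheme'' is where all the work lies, but you do not propose one, and your description contains a circularity: you say the decoding is ``performed relative to the recursively reconstructed earlier sequence'', yet you also need to decode $e$ from $x$ \emph{before} you have $N$ and hence before you have the earlier sequence. The paper's solution is to make the decoding map $O\colon x\mapsto e$ completely intrinsic to $x$, independent of any side information. Concretely, the paper works inside the co-ideal $\mathcal{W}^+$ of sets containing arbitrarily long arithmetic progressions (Van der Waerden's theorem says the complement is an ideal). The decoder $O$ lists the successive maximal arithmetic progressions of length $\geq 3$ in $x$ and records the parity of each length. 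Because $\mathcal{W}$ is an ideal, for every $\xi<\alpha$ one of $y_\alpha\cap x_\xi$, $(\omega\setminus y_\alpha)\cap x_\xi$ is still in $\mathcal{W}^+$; hence one can choose $x_\alpha\subseteq y_\alpha$ (or its complement) which is a pseudointersection of the tower, lies in $\mathcal{W}^+$, and whose arithmetic-progression pattern realises any prescribed code. The $<_L$-least such $x$ is then taken, and it lies in $L_{\delta_\alpha+\omega}$; so the paper \emph{does} use $<_L$-minimality, contrary to your remark. The $\Pi^1_1$ definition is then simply ``$O(x)\in\WO$ and every code for $L_{\|O(x)\|+\omega}$ thinks $x$ was produced by the recursion'': no existential quantifier, because the witness model is recovered from $O(x)$ alone.

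Finally, your closing remark that the Q-point case goes through with ``the definability argument being unchanged'' is too optimistic: the Van der Waerden coding does not survive the Q-point requirement of selecting at most one point per block of a finite partition (a single point is never an arithmetic progression of length $\geq 3$). The paper switches to the co-ideal $(\Fin^2)^+$ on $\omega\times\omega$ and a different decoder comparing the first two infinite vertical sections. So the combinatorial vehicle for the coding must be re-chosen to fit the diagonalization; that choice is the content of each theorem.
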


\begin{thm}\label{pi11qpoint}
 There is a $\Pi^1_1$ base for a Q-point in the constructible universe $L$. 
\end{thm}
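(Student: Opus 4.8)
The plan is to run, in $L$, a recursion of length $\omega_1$ of essentially the same shape as for Theorem~\ref{pi11ppoint}, with the P-point diagonalisation replaced by a selector diagonalisation. Fix the canonical $\Sigma^1_2$-good wellordering $<_L$ of the reals and enumerate along it a list of tasks, each task being either a real to be decided or a partition of $\omega$ into finite sets to which a selector is to be adjoined. By recursion on $\alpha<\omega_1$ I build an increasing chain $\langle\mathcal F_\alpha:\alpha<\omega_1\rangle$ of countably generated filters ($\mathcal F_0$ the Fr\'echet filter, $\mathcal F_\alpha=\bigcup_{\beta<\alpha}\mathcal F_\beta$ at limits) together with sets $x_\alpha$ witnessing the steps: when the $\alpha$-th task is a real $A$, pick $x_\alpha$ positive over $\mathcal F_\alpha$ (so $\mathcal F_{\alpha+1}$ stays proper) with $x_\alpha\subseteq A$ or $x_\alpha\subseteq\omega\setminus A$; when it is a partition $\langle b_k:k\in\omega\rangle$ into finite sets, pick such an $x_\alpha$ that is in addition a selector, $|x_\alpha\cap b_k|\le 1$ for all $k$; and let $\mathcal F_{\alpha+1}$ be generated by $\mathcal F_\alpha\cup\{x_\alpha\}$. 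With the standard book-keeping (so that the $\alpha$-th real is decided by stage $\alpha$), $\mathcal U:=\bigcup_\alpha\mathcal F_\alpha$ is an ultrafilter with base $\mathcal B:=\{x_\alpha:\alpha<\omega_1\}$, and it is a Q-point: every partition of $\omega$ into finite sets has a selector in $\mathcal B\subseteq\mathcal U$. As it must be, in view of the impossibility of a coanalytic base for a Ramsey ultrafilter, $\mathcal U$ is automatically not a P-point; one can also arrange this by hand, by running the recursion over the Borel filter $\{x\subseteq\omega:\forall n\;|C_n\setminus x|<\omega\}$ of a fixed partition $\omega=\bigsqcup_nC_n$ into infinitely many infinite pieces, keeping every $C_n$ out of $\mathcal U$.

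To make $\mathcal B$ coanalytic one decorates the recursion exactly as in the proof of Theorem~\ref{pi11ppoint}: arrange that $x_\alpha$ additionally codes, through a fixed recursive coding, a wellorder of $\omega$ of type $\alpha$ together with a countable wellfounded model $N_\alpha\models\mathrm{ZF}^-+V{=}L$ with $\langle x_\beta:\beta<\alpha\rangle\in N_\alpha$ and containing an initial segment of $<_L$ long enough to justify the choices made so far — concretely, $N_\alpha=L_{\delta_\alpha}$ for the least $\delta_\alpha$ at which stage $\alpha$ of the recursion has been completed. This forces $x_\alpha$ to enter the constructible hierarchy only after $L_{\delta_\alpha}$, so that, by G\"odel's condensation and the fact that the recursion is given by a $\Delta_1$ formula which is absolute between $L$ and any transitive model of $\mathrm{ZF}^-+V{=}L$, one has: $x\in\mathcal B$ if and only if the real recursively coded inside $x$ codes a \emph{wellfounded} structure satisfying $\mathrm{ZF}^-+V{=}L$, containing $x$, and internally declaring $x$ to be the set produced at the stage whose ordinal is also coded inside $x$. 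Wellfoundedness is the only $\mathbf\Pi^1_1$ clause and the rest is arithmetic in $x$, so $\mathcal B$ is $\Pi^1_1$; that the displayed equivalence holds in both directions is the usual condensation/absoluteness argument.

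The genuine point — the only place where anything differs from the P-point case — is verifying that the two demands on $x_\alpha$ at a partition stage are compatible: being a selector of $\langle b_k\rangle$ is a far more rigid constraint than almost containing a prescribed pseudo-intersection, so one has to show that among the selectors of $\langle b_k\rangle$ positive over $\mathcal F_\alpha$ there is still enough room to encode $N_\alpha$. I expect this to go through because $\mathcal F_\alpha$ is only countably generated: build $x_\alpha=\{t_0<t_1<\cdots\}$ step by step, at step $j$ choosing $t_j$ in a currently positive set and in a block $b_k$ not used before (possible since each positive set is infinite and the $b_k$ are finite, hence it meets infinitely many of them), and use the finite choice of which block and which of its points to fix the $j$-th digit of the code of $N_\alpha$ — this is exactly the device that lets pseudo-intersections carry codes in the proof of Theorem~\ref{pi11ppoint}, now just constrained to pick at most one point from each block. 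The remaining work is bookkeeping: the task list must be synchronised so that the stage at which a real or partition is treated, the ordinal coded into $x_\alpha$, and the level $\delta_\alpha$ all fit together as the $\Pi^1_1$ definition requires.
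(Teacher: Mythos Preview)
Your outline captures Miller's framework correctly, and your parenthetical suggestion to run the recursion above the filter $\{x:\forall n\;|C_n\setminus x|<\omega\}$ is in fact the heart of the matter---it is exactly the dual of $\Fin^2$, which is what the paper uses (working on $\omega\times\omega$). But you treat this as an optional cosmetic device, whereas it is essential, and you leave the actual coding mechanism unspecified in a way that hides the main difficulty.

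The gap is twofold. First, your proposed coding---``use the finite choice of which block and which of its points to fix the $j$-th digit''---cannot work: the decoding map $O$ in the $\Pi^1_1$ definition must be a fixed Borel function of $x$ alone, with no access to the partition $\langle b_k\rangle$ that $x$ happens to select from. Intrinsic alternatives (parities of $t_j$, gap sizes) are not obviously available either, since by the time you reach stage $\alpha$ the filter $\mathcal F_\alpha$ may force the selector into a single residue class or a fixed spacing pattern. Second, the claim that $\mathcal U$ is ``automatically not a P-point'' is circular. Your step-by-step construction of $x_\alpha$ (``choose $t_j$ in a currently positive set'') most naturally produces a pseudointersection of $\mathcal F_\alpha$; do this at every stage and $\langle x_\alpha\rangle$ is a tower, $\mathcal U$ is Ramsey, and Theorem~\ref{nopi11ramsey} kills the $\Pi^1_1$ claim outright.

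The paper resolves both issues simultaneously by making the $(\Fin^2)^+$ invariant do real work: all finite intersections of the $x_\xi$ are required to lie in $(\Fin^2)^+$, so the ultrafilter extends the dual of $\Fin^2$ and is provably not a P-point. Crucially, this invariant also supplies the coding room. The Borel map $O$ looks at the first two \emph{infinite vertical sections} $x_0,x_1$ of $x\subseteq\omega\times\omega$ and outputs $O(x)(n)=0$ or $1$ according to whether $x_0(n)\ge x_1(n)$. In building $x_\alpha$ one reserves two columns $k_0,k_1$ purely for laying down pairs of points in the prescribed order (this encodes $\delta_\alpha$), uses the remaining infinitely many columns to meet every finite intersection of earlier $x_\xi$ in a $(\Fin^2)^+$ set, and threads all of this through unused pieces of the given finite partition so that $x_\alpha$ is still a selector. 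That separation of duties---two columns for the code, the rest for positivity and selection---is the idea your sketch is missing.
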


Section 2 will provide an introduction to the techniques employed in proving these results. In strong contrast we will show in Section~\ref{sec:noramsey} that:

\begin{thm}\label{nopi11ramsey}
 There is no $\mathbf{\Pi}^1_1$ base for a Ramsey ultrafilter. 
\end{thm}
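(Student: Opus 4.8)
The plan is to exploit the tension between two facts: a $\mathbf\Pi^1_1$ base $\mathcal B$ for an ultrafilter generates a set $\mathcal U$ that is $\mathbf\Sigma^1_2$ (being $x \in \mathcal U \iff \exists b \in \mathcal B\, (b \subseteq^* x)$), hence $\mathbf\Delta^1_2$ by ultrafilter-ness (the complement of $\mathcal U$ is $\{x : \omega \setminus x \in \mathcal U\}$, also $\mathbf\Sigma^1_2$); and the combinatorial rigidity of Ramsey ultrafilters. The Ramsey property says that for every coloring $c\colon [\omega]^2 \to 2$ there is a homogeneous $x \in \mathcal U$, and in particular $\mathcal B$ must contain (mod finite) such an $x$ for every $c$. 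I would try to build a single Borel assignment $c \mapsto A_c$ of infinite sets, coded continuously in $c$, such that no member of $\mathcal B$ can be almost-contained in $A_c$ for ``too many'' colorings $c$ simultaneously, deriving a contradiction with a counting/uniformization argument.

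More concretely, here is the approach I expect to work. First, reduce to a base $\mathcal B$ consisting of pairwise $\subseteq^*$-decreasing or otherwise well-structured sets is not available in general, so instead I would use effective descriptive set theory: relativizing, assume $\mathcal B$ is $\Pi^1_1$ (lightface). Then $\mathcal U$ is $\Delta^1_2$, and by the Martin--Solovay / standard tree representation, $\mathcal U$ has a $\Sigma^1_2$ good wellordering of its members in the relevant sense, or at least one can pick a canonical ``small'' element. The key step is a fusion/diagonalization: enumerate a suitable family of colorings $\langle c_s : s \in 2^{<\omega}\rangle$ (or of finite-to-one maps, using the $Q$-point half) so that along any branch the homogeneous sets are forced to shrink, and use $\Pi^1_1$-uniformization (Kondô) on $\mathcal B$ to extract, from the hypothetical membership of a homogeneous set in $\mathcal B$ for each $c_s$, a $\Pi^1_1$ (hence, by boundedness, bounded-rank) object that cannot exist — for instance a $\Pi^1_1$ set of branches through a tree that is simultaneously unbounded in the $\subseteq^*$ order on $\mathcal B$ and of bounded Borel complexity. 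Equivalently, one shows $\mathcal B$ would have to be non-meager in an appropriate Polish space while being $\Pi^1_1$ and satisfying a Baire-category-contradicting selectivity property.

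The cleanest route is probably via the Galvin--Prikry / Ellentuck topology: a Ramsey ultrafilter $\mathcal U$ is exactly a selective coideal refinement, and $\mathcal B \subseteq \mathcal U$ being a base means $\mathcal B$ is $\subseteq^*$-dense in $\mathcal U$. I would show that a $\mathbf\Pi^1_1$ set that is $\subseteq^*$-dense in a selective ultrafilter must, by a Ramsey-theoretic absoluteness argument (Silver's theorem that $\mathbf\Sigma^1_2$ sets are Ramsey, applied inside the Ellentuck space, or the Mathias characterization of selective ultrafilters), contain a $\subseteq^*$-decreasing $\omega_1$-tower or a perfect almost-disjoint refinement living inside $\mathcal U$ — either of which contradicts that $\mathcal U$, being an ultrafilter, has no such uncountable structure below a single set once we use selectivity to diagonalize. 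The main obstacle, and where the real work lies, is precisely this last combinatorial step: converting ``$\mathbf\Pi^1_1$ and $\subseteq^*$-dense in a Ramsey ultrafilter'' into a concrete uncountable or perfect configuration that the ultrafilter axioms forbid. I expect this to require a careful rank analysis of the $\Pi^1_1$ set $\mathcal B$ (a Borel derivative peeling off $\subseteq^*$-cofinal pieces) combined with the Mathias/Silver Ramsey property to ensure the derivative does not stabilize — yielding the contradiction.
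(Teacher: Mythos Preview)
Your proposal circles around several of the right ingredients (Mathias forcing, $\Pi^1_1$-boundedness, selectivity) but never assembles them into a working argument, and the endpoints you aim for are wrong. An ultrafilter can perfectly well contain an $\omega_1$ $\subseteq^*$-decreasing tower (indeed, the paper's Theorem~\ref{pi11ppoint} constructs exactly such a tower generating an ultrafilter), and can contain perfect sets; so ``find a tower or a perfect almost-disjoint family inside $\mathcal U$'' is not a contradiction. Likewise, the Kond\^o-uniformization route and the ``rank derivative that never stabilizes'' idea are not developed to the point where one can see what the contradiction would be.

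The paper's argument is short and uses two precise facts you did not isolate. First, for a P-point $\mathcal U$ and any countable $M\ni\mathcal U$, there is an $x\in\mathcal U$ that is $\mathbb M(\mathcal U)$-generic over $M$ (this is the game characterization in Lemma~\ref{lem:game} and Corollary~\ref{cor:ppointgeneric}). Second --- and this is the specifically Ramsey step --- every $y\subseteq^* x$ is again $\mathbb M(\mathcal U)$-generic over $M$ (Lemma~\ref{lem:ramseygeneric}). Now fix a continuous $f$ with $X=f^{-1}(\WO)$. For every $y\in X$ with $y\subseteq^* x$, the ordinal $\|f(y)\|$ lies in $M[y]$, hence below $\alpha:=M\cap\omega_1$. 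Thus the Borel set $X'=\{y: f(y)\in\WO,\ \|f(y)\|\le\alpha\}$ contains every member of the base below $x$, so $X'$ already generates $\mathcal U$; but a Borel set cannot generate an ultrafilter. The missing idea in your sketch is precisely this hereditary genericity of Mathias reals for Ramsey ultrafilters, which is what converts ``$\Pi^1_1$ base'' into ``bounded rank below a single $x\in\mathcal U$'' and hence into ``Borel base''.
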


Notice that any ultrafilter that is $\mathbf\Sigma^1_n$ or $\mathbf\Pi^1_n$ is already $\mathbf\Delta^1_n$. Namely suppose that $\varphi$ defines an ultrafilter, then we have that $\varphi(x) \leftrightarrow \neg \varphi(\omega \setminus x)$. Moreover any base for an ultrafilter that is $\mathbf\Sigma^1_n$ or $\mathbf\Pi^1_n$ generates a $\mathbf\Delta^1_n$ or respectively a $\mathbf\Delta^1_{n+1}$ ultrafilter. 

In Section 5 we will compare $\mathbf\Delta^1_2$ ultrafilters to $\mathbf\Pi^1_1$ bases. As a main result we find that: 

\begin{thm}\label{thm:delta1n}
The following are equivalent for any $r \in 2^\omega$, $n \in \omega$. 

\begin{enumerate}
 \item There is a $\Delta^1_{n+1} (r)$ ultrafilter. 
 \item There is a $\Pi^1_n(r)$ ultrafilter base. 
\end{enumerate}
\end{thm}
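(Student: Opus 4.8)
The equivalence is established by proving the two implications separately.

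The implication $(2)\Rightarrow(1)$ is the observation made in the Remark preceding the theorem, carried out with attention to the parameter. If $\mathcal B$ is a $\Pi^1_n(r)$ base, then $\mathcal U=\{x\in\mathcal P(\omega):\exists b\,(b\in\mathcal B\wedge b\subseteq x)\}$ is $\Sigma^1_{n+1}(r)$; since $\mathcal U$ is an ultrafilter, $x\in\mathcal U\leftrightarrow\omega\setminus x\notin\mathcal U$ is a $\Pi^1_{n+1}(r)$ condition, so $\mathcal U$ is $\Delta^1_{n+1}(r)$. (For $n=0$ one reads $\Pi^1_0(r)$ as ``arithmetic in $r$'' and the computation is unchanged.)

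The content is $(1)\Rightarrow(2)$. Fix a $\Delta^1_{n+1}(r)$ ultrafilter $\mathcal U$ and a $\Pi^1_n(r)$ formula $\Phi$ with $x\in\mathcal U\leftrightarrow\exists y\,\Phi(x,y)$; using that $\mathcal U$ is also $\Pi^1_{n+1}(r)$ one likewise gets $x\notin\mathcal U\leftrightarrow\exists y\,\Phi(\omega\setminus x,y)$, so that both ``$x\in\mathcal U$'' and ``$x\notin\mathcal U$'' carry witnesses $y$ whose correctness is a $\Pi^1_n(r)$ matter. The plan is to relativize the constructions behind Theorems~\ref{pi11ppoint} and~\ref{pi11qpoint}, using $\mathcal U$ as an oracle in place of the fine structure of $L$: run a recursion of length $\omega_1$ in which every binary choice (``does one commit to $z$ or to $\omega\setminus z$?'') is resolved canonically by asking the oracle which side lies in $\mathcal U$, producing a $\subseteq^*$-decreasing, suitably diagonalizing chain $\langle b_\alpha:\alpha<\omega_1\rangle$ generating an ultrafilter. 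Here one needs a preliminary reduction ensuring that a recursion of length $\omega_1$ suffices — i.e. that one may work with an ultrafilter admitting a base of size $\aleph_1$ which is moreover enumerable in a $\Sigma^1_{n+1}(r)$ fashion; establishing this is where the $\Sigma^1_{n+1}(r)$-definability of $\mathcal U$ is used essentially (for $n=1$ via the Shoenfield analysis of $\Sigma^1_2(r)$ sets), and it is one of the two genuine obstacles.

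To bring the defining complexity of the base down to exactly $\Pi^1_n(r)$, the key device is self-coding. Exploiting the freedom in the choice of $b_\alpha$ — one reserves coding room inside it, and a fixed-point/recursion-theorem argument lets $b_\alpha$ encode a $\Pi^1_n(r)$-checkable witness for its own membership in $\mathcal U$ — one arranges that $b_\alpha$ decodes to a complete certified record of the recursion through stage $\alpha$: the well-order $\alpha$, the earlier $b_\beta$'s, every $\mathcal U$-query made, and attached to each query the answer together with a witness $y$ for the corresponding instance of $\Phi$. Membership in the base is then recognized by the condition ``$x$ decodes to a well-founded, internally consistent, fully certified run of the recursion having $x$ itself as its final output'': well-foundedness is $\Pi^1_1$, checking the certificates is $\Pi^1_n(r)$, and internal consistency is arithmetic, so the whole condition is $\Pi^1_n(r)$ — crucially with no further existential quantifier, since $x$ serves as its own witness. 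Finally one checks that the resulting $\mathcal B$ is contained in $\mathcal U$ (forced by the certificates) and cofinal in $(\mathcal U,\supseteq^*)$ (from the diagonalization), hence is a base. The two points flagged — the preliminary reduction to an $\aleph_1$-generated, $\Sigma^1_{n+1}(r)$-enumerable base, and the self-coding bookkeeping that absorbs precisely the gap between $\Delta^1_{n+1}$ and $\Pi^1_n$ — are where the real work lies.
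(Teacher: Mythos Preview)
Your $(2)\Rightarrow(1)$ is correct and matches the paper.

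For $(1)\Rightarrow(2)$ the paper takes a completely different and much shorter route that avoids both of the obstacles you flag. There is no recursion of length $\omega_1$, no Miller-style self-coding of a construction history, and no preliminary reduction to an $\aleph_1$-generated base. Instead, the paper passes to the Fubini product $\mathcal U\otimes\mathcal U$ on $\omega\times\omega$ and builds a $\Pi^1_n(r)$ base for \emph{that} ultrafilter directly. Writing $x\in\mathcal U\leftrightarrow\exists w\,\varphi(x,w)$ with $\varphi$ in $\Pi^1_n(r)$, one declares $y\subseteq\omega\times\omega$ to be in the base $X$ iff the set $z(y)$ of nonempty columns satisfies $\varphi(z(y),w_0)$ and each nonempty column $y^n$ satisfies $\exists s\in[\omega]^{<\omega}\,\varphi(s\cup y^n,w_{n+1})$, where the entire sequence $\langle w_k\rangle$ of witnesses is read off from $y$ itself by a fixed Borel function $O$ that decodes a real from the pattern of minima of successive nonempty columns. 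Membership in $X$ is then $\Pi^1_n(r)$ because the witnesses are extracted rather than existentially quantified; $X\subseteq\mathcal U\otimes\mathcal U$ is immediate; and cofinality holds because below any $u\in\mathcal U\otimes\mathcal U$ one can first delete the columns not in $\mathcal U$ and then trim each remaining column from below to realize any prescribed sequence of minima, hence any prescribed $O$-value. The whole argument fits on a page.

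Your approach may be completable, but the preliminary reduction you identify is a genuine gap, not a routine step: the existence of a $\Delta^1_{n+1}(r)$ ultrafilter does not obviously produce one with an $\aleph_1$-sized, $\Sigma^1_{n+1}(r)$-enumerable base, and for $n\geq 2$ there is no Shoenfield-type structure theorem to lean on. The paper's Fubini trick sidesteps this entirely: by moving to $\omega\times\omega$ it acquires enough coding room to store the countably many $\Pi^1_n$-witnesses needed for a single element, with no transfinite bookkeeping at all.
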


In Section 6 we study the effects of adding reals to the definability of utrafilters. In Section 7 we introduce a new cardinal invariant that is a Borel version of the classical ultrafilter number $\mathfrak{u}$ and make some observations.

\section{An introduction to Miller's coding technique}

All our constructions of coanalytic objects in $\L$ will rely on a technique streamlined by A. Miller in his far-reaching paper \cite{Miller1989}. We outline here the theory behind this technique in a most general way. 

When we say that $z$ codes the ordinal $\alpha$, we mean the following. To any real $z \in 2^\omega$ we associate a relation $E_z$ on $\omega$ defined by 
$$E_z(n,m) \leftrightarrow z(2^n 3^m) =1.$$ This relation may be a linear order and if it is a well-order and isomorphic to $\alpha$ we say that it codes $\alpha$. Such $\alpha$ is unique and we define $\| z \| := \alpha$. More generally we say that $z$ codes $M$ if $(\omega,E_z)$ is isomorphic to $(M,\in)$. The set of $z \in 2^\omega$ coding an ordinal is denoted $\WO$. The set $\WO$ is tightly connected to coanalytic sets. On one hand side, $\WO$ is itself $\Pi^1_1$ and on the other, for any $\mathbf{\Pi}^1_1$ set $X \subseteq 2^\omega$, there is a continuous function $f \colon 2^\omega \to 2^\omega$ so that $X = f^{-1}(\WO)$.

There is a very canonical way of defining in $\L$ various combinatorial subsets $X$ of reals in a $\bf{\Delta^1_2}$ fashion. 
Typically the elements are found recursively by making adequate choices which are absolute between models of the form $L_\alpha$ (e.g. taking the $<_\L$ least candidate which has some simple property holding with respect to the previously chosen reals).

Then $x \in X$ can be written as

\begin{equation} \underbrace{\text{}\exists M}_{\exists} \text{}[\text{}\underbrace{ M\text{ is well-founded}}_{\forall}, \underbrace{\text{}x \in M}_{\Delta^1_1} \text{ and } \underbrace{M \models V = L \wedge \varphi(x)}_{\Delta^1_1}\text{}] \end{equation}

or as

\begin{equation} \underbrace{\text{}\forall M \models V=L, x \in M}_{\forall + \Delta^1_1} [\text{} \underbrace{ M\text{ is not well-founded }}_{\exists} \text{ or } \underbrace{M \models \varphi(x)}_{\Delta^1_1}\text{}]. \end{equation}

Quantifying over models is shorthand for quantifying over codes in $2^\omega$ of countable models satisfying some basic set theoretic axioms. Thus e.g. (1) can be recast as ``$\exists z \in 2^\omega ((\omega,E_z) \text{ is well-founded, } x \in (\omega,E_z) \text{ and } (\omega,E_z) \models V = L \wedge \varphi(x))$'', where $x \in (\omega, E_z)$ means that $x \in M$ for $M$ the Mostowski collapse of $(\omega,E_z)$. It is not difficult to see that this can be expressed in a $\Delta^1_1$ way. 

As such, finding a $\Delta^1_2$ ultrafilter base in $\L$ is very simple. The major improvement in Miller's technique is to get rid of the first existential quantifier in (1). 
This is done by letting $x$ already encode a relevant well-founded model $M$ in a Borel or even in a recursive way. Then if $C$ is the Borel coding relation used, the definition usually looks as follows: \\

\begin{equation} \underbrace{x \in Y}_{\forall} \text{ and }\underbrace{\text{}\forall z \in 2^\omega}_{\forall} [\underbrace{\neg C(x,z)}_{\Delta^1_1} \text{ or }  \underbrace{ (\omega,E_z) \models V=L \wedge \varphi(x)}_{\Delta^1_1}], \end{equation} for some known coanalytic $Y$.

\begin{lemma}\label{lem:Lalpha[x]isborel}
 There is a lightface Borel set $C \subseteq (2^\omega)^3$ so that whenever $z$ codes $\alpha < \omega_1$ and $r,y \in 2^\omega$ then $(z,r,y) \in C$ iff $y$ codes $L_\alpha[r]$.
\end{lemma}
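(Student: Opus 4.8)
The plan is to establish the lemma by a careful unwinding of the recursion that builds $L_\alpha[r]$, showing at each stage that the relevant facts are arithmetically (indeed $\Delta^1_1$) decidable from a code $z$ of $\alpha$. First I would recall the standard machinery: $\WO$ is $\Pi^1_1$, and the relation ``$z \in \WO$ and $\|z\| = \|z'\|$'' together with ``$z, z' \in \WO$ and $\|z\| < \|z'\|$'' are both $\Delta^1_1$ on $\WO \times \WO$ (by the $\Sigma^1_1$-boundedness / comparison-of-well-orders results). I would also recall that the predicate ``$y$ codes a well-founded model of a sufficiently large fragment of $\mathrm{ZF} + V = L[\dot r]$ whose well-founded part, under Mostowski collapse, is transitive'' is $\Delta^1_1$, as indicated in the surrounding text: the statement ``$(\omega, E_y)$ is well-founded'' is $\Pi^1_1$, but in the presence of the other conditions it can be replaced by ``$(\omega, E_y)$ has no infinite descending $E_y$-chain that is itself coded arithmetically'', and more to the point, the overall relation ``$y$ codes the transitive set $L_\alpha[r]$'' will be shown $\Delta^1_1$ directly.

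The heart of the argument is to express ``$y$ codes $L_\alpha[r]$'' both as a $\Sigma^1_1$ and as a $\Pi^1_1$ relation in $(z, r, y)$, given $z \in \WO$ with $\|z\| = \alpha$. For the $\Sigma^1_1$ side: $y$ codes $L_\alpha[r]$ iff there exists a real $w$ coding a structure $(\omega, E_w)$ together with a map $\omega \times \omega \to \omega$ witnessing that $(\omega, E_w)$ is a transitive $\in$-model of ``$V = L[\dot r]$ of height exactly $\mathrm{Ord}$'' with parameter $r$ correctly interpreted, such that $(\omega, E_w)$ is well-founded, its ordinal height is isomorphic to $(\omega, E_z)$ (a $\Delta^1_1$ condition given $z \in \WO$), and $y$ codes the top level $L_\alpha[r]$ of this model — equivalently, $(\omega, E_y)$ embeds as the full structure $(\omega, E_w)$ itself. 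By condensation (which holds relativized: $L_\alpha[r]$ is the unique transitive model of $V = L[\dot r]$ of ordinal height $\alpha$ containing $r$, for $\alpha$ admissible-or-above; for small $\alpha$ one instead refers to the canonical level directly), such $w$ exists and is essentially unique, so the existential quantifier is harmless. The matrix inside is arithmetic in $w, z, r, y$ except for the well-foundedness clause ``$(\omega, E_w)$ is well-founded,'' which is $\Pi^1_1$; but since we have already forced $(\omega, E_w)$ to have ordinal standard part isomorphic to $z \in \WO$ and to satisfy enough of $V = L[\dot r]$ to be extensional and to believe every ordinal lies in its standard part, well-foundedness of the whole structure follows from well-foundedness of its ordinals, hence from $z \in \WO$, so that clause can be dropped. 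This makes the relation $\Sigma^1_1$ on the domain where $z \in \WO$.

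For the $\Pi^1_1$ side, I would dualize: $y$ codes $L_\alpha[r]$ iff for all reals $w$, if $w$ codes a well-founded extensional $\in$-model of the relevant fragment of $V = L[\dot r]$ whose ordinals are isomorphic to $\|z\|$ and which contains $r$ correctly, then $(\omega, E_y)$ is isomorphic to $(\omega, E_w)$ — using uniqueness of $L_\alpha[r]$ again, together with the fact that ``$y$ codes some transitive set at all'' ($(\omega, E_y)$ well-founded and extensional) can be handled by asserting that there exists the isomorphism in the conclusion. One then checks that on the set $\{(z,r,y) : z \in \WO\}$ the $\Sigma^1_1$ and $\Pi^1_1$ descriptions agree with the intended relation, so the relation is $\Delta^1_1$ relative to $\WO$; intersecting with/encoding membership in the $\Pi^1_1$ set $\WO$ in the standard way and applying the fact that a relation that is $\Sigma^1_1$ and $\Pi^1_1$ on a $\Pi^1_1$ (and on its complement, where we simply declare it false) is $\Delta^1_1 = $ Borel, we get the desired lightface Borel $C$. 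The main obstacle I anticipate is purely bookkeeping: pinning down the exact finite fragment $T$ of $\mathrm{ZF} + V = L[\dot r]$ that both is strong enough to force condensation/uniqueness of the model of height $\alpha$ and is preserved under the relevant collapses, and handling the small or non-admissible $\alpha$ where ``$L_\alpha[r] \models T$'' may fail — this is dealt with by replacing ``model of $T$ of height $\alpha$'' with ``the $\alpha$-th level of the $L[r]$-hierarchy as computed inside any model of $T$ of height $> \alpha$,'' which reintroduces one bounded existential quantifier over a code of $\alpha + 1$ but stays $\Delta^1_1$ since $\|z\| \mapsto \|z\| + 1$ is Borel on $\WO$.
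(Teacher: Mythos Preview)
Your approach is correct but takes a genuinely different route from the paper. The paper's proof is essentially one line: since $L_\alpha[r]$ is built by a transfinite recursion of length $\alpha$ whose successor and limit steps are each effectively (indeed arithmetically) determined, one can express ``$y$ codes $L_\alpha[r]$'' as ``there exists / for all sequences $\langle E_k \rangle$ indexed by the field of $E_z$, satisfying the recursive clauses of the $L[r]$-hierarchy, $E_y$ is their union.'' Uniqueness of the recursion makes the $\exists$ and $\forall$ forms coincide, yielding a $\Delta^1_1$ definition uniformly in $(z,r,y)$.

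You instead characterize $L_\alpha[r]$ model-theoretically, as the unique transitive model of a fragment of $V=L[\dot r]$ of ordinal height $\alpha$, invoking condensation for uniqueness and arguing that well-foundedness of the ordinals (inherited from $z\in\WO$) forces well-foundedness of the whole structure. This is a legitimate and well-known alternative, but it buys you exactly the bookkeeping problems you anticipate: one must isolate a finite fragment $T$ strong enough for condensation yet satisfied at the relevant levels, and the non-admissible or successor $\alpha$ need the detour through a taller model you describe. The paper's direct-recursion argument sidesteps all of this: it is uniform in $\alpha$, never asks what first-order theory $L_\alpha[r]$ satisfies, and makes the lightface Borel complexity immediate rather than something to be extracted after the fact.
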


\begin{proof}
The claim is easy to verify by noting that an adequate $E_{y}$ can be constructed by recursion on $\alpha$. Thus $(z,r,y) \in C$ can be defined by formulas of the form ``$\exists$/$\forall \seq{E_k : k \in \omega}$ a sequence indexed via the order coded by $z$ satisfying certain recursive assumptions, $E_{y}$ is the union of all $E_k$''. This definition is uniform on $z$ and $r$.
\end{proof}

\begin{lemma}\label{lem:plusomega}
There is a recursive function $(\cdot)^{+\omega} \colon 2^\omega \to 2^\omega$ so that whenever $z$ codes $\alpha$, then $(z)^{+\omega}$ codes $\alpha + \omega$. 
\end{lemma}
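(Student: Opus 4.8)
The plan is to write down $(z)^{+\omega}$ explicitly as a function of $z$ and then verify the two required properties separately: that the assignment is recursive, and that it has the claimed effect on codes of ordinals.

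First I would fix the standard bijection between $\omega$ and two disjoint copies of $\omega$, identifying $n$ in the first copy with $2n$ and $n$ in the second copy with $2n+1$. I will let the even numbers carry a copy of the relation $E_z$ and the odd numbers carry the usual order of $\omega$, placed entirely above the even part. So, writing $w = (z)^{+\omega}$, the relation $E_w$ should satisfy: $E_w(2n,2m) \leftrightarrow E_z(n,m)$; $E_w(2n,2m+1)$ for all $n,m$; never $E_w(2n+1,2m)$; and $E_w(2n+1,2m+1) \leftrightarrow n<m$. Concretely I set $w(2^a3^b) = 1$ exactly when $E_w(a,b)$ holds according to these clauses, and $w(k) = 0$ for all $k$ not of the form $2^a 3^b$.

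Next I would check that $z \mapsto (z)^{+\omega}$ is recursive. Given an input $k$, one decides recursively whether $k = 2^a 3^b$ for some (then unique) $a,b$ and computes them; the value $w(k)$ is then determined purely by the parities of $a$ and $b$, together with the comparison $n<m$ in the odd--odd case, except in the even--even case $a = 2n$, $b = 2m$, where it equals $z(2^n 3^m)$. Thus $w(k)$ depends on at most one bit of $z$, with a computable dependence, so the map is continuous and recursive.

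Finally, assuming $z \in \WO$ with $\|z\| = \alpha$, I would check that $(\omega, E_w) \cong \alpha + \omega$. Linearity of $E_w$ follows by a short case analysis over the four clauses, using that $E_z$ is a linear order on $\omega$ and that the even block lies entirely below the odd block. For well-foundedness, given nonempty $A \subseteq \omega$: if $A$ contains an even number, the $E_z$-least among the halves $n$ with $2n \in A$ yields an $E_w$-least element of $A$; otherwise $A$ consists of odd numbers and has a $<$-least element. Hence $E_w$ is a well-order, and since it is a copy of $E_z$ (type $\alpha$) followed by a copy of $\omega$, its type is $\alpha + \omega$, so $(z)^{+\omega} \in \WO$ with $\|(z)^{+\omega}\| = \alpha + \omega$. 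I do not expect a genuine obstacle here beyond this routine bookkeeping; the one place to be slightly careful is handling the positions $k$ not of the form $2^a 3^b$ — irrelevant to the coded relation, but still needing assigned values so that $(\cdot)^{+\omega}$ is total and recursive.
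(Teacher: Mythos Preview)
Your construction is exactly the one the paper gives: split $\omega$ into evens and odds, copy $E_z$ onto the evens, put the standard order of $\omega$ on the odds, and place the odd block entirely above the even block. The paper records only the defining clauses for $y(2^n3^m)$ without spelling out the verifications of recursiveness and order type, but your argument and theirs are identical in substance.
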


\begin{proof}
 Let $(z)^{+\omega} = y$ such that $y(2^n3^m) = 1$ iff $\begin{cases} n \text{ even } \wedge m \text{ even } \wedge z(2^\frac{n}{2}3^\frac{m}{2}) = 1 \\ n \text{ even } \wedge m \text{ odd } \\ n \text{ odd } \wedge m \text{ odd } \wedge n < m. \end{cases}$ 
\end{proof}

\section{$\Pi^1_1$ bases for P- and Q-points}

In \cite{Fischer2018} the authors constructed, using Miller's technique, a coanalytic tower (i.e. a set $X \subseteq [\omega]^\omega$ well-ordered wrt $^*\supseteq$ and with no pseudointersection). A crucial property of the tower was that all its elements were split by the set of even natural numbers. In particular this meant that the tower could not generate an ultrafilter. We will construct in $L$ a tower generating an ultrafilter and thus generating a P-point.

Before we start to construct the $\Pi^1_1$ P-point base, we need some ingredients. 

\begin{definition}
 We call $\mathcal{W}^+$ the set of $x \in [\omega]^\omega$ containing arbitrary long arithmetic progressions, i.e. $\forall k \in \omega \exists a,b \in \omega ( \{a\cdot l + b : l <k \} \subseteq x)$.
\end{definition}

The following fact follows from Van der Waerden's Theorem which is well known. 

\begin{fact}
 The set $\mathcal{W} = \mathcal{P}(\omega) \setminus \mathcal{W}^+$ is a proper ideal on $\omega$. It is called the Van der Waerden ideal.
\end{fact}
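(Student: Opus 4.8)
The plan is to verify the two assertions packaged in the \textbf{Fact}: first that $\mathcal{W}^+$ is upward closed (so $\mathcal{W}$ is a genuine ideal) and second that $\mathcal{W}$ is \emph{proper}, i.e.\ $\omega \notin \mathcal{W}$, equivalently that $\omega$ and in fact every cofinite set lies in $\mathcal{W}^+$. The first part is immediate from the definition: if $x \subseteq x'$ and $x$ contains the arithmetic progression $\{a\cdot l + b : l < k\}$ then so does $x'$, hence $\mathcal{W}^+$ is closed under supersets and $\mathcal{W}$ under subsets. Closure of $\mathcal{W}$ under finite unions is exactly Van der Waerden's theorem in its partition form: if $\omega = x_0 \cup x_1$ (or more generally a finite union), then for every $k$ one of the pieces contains a length-$k$ arithmetic progression; I would spell this out by observing that if $x_0, x_1 \in \mathcal{W}$, pick $k$ witnessing $x_0 \notin \mathcal{W}^+$ past some bound and $k$ witnessing $x_1 \notin \mathcal{W}^+$, take the max, and apply Van der Waerden to the $2$-coloring of $\{0,\dots,N-1\}$ induced by $x_0$ for $N = W(2,k)$ the Van der Waerden number, obtaining a monochromatic progression of length $k$ contained in $x_0$ or $x_1$, a contradiction. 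Properness is the special case $x_0 = \omega$, $x_1 = \emptyset$, or simply the observation that $\omega \supseteq \{0,1,\dots,k-1\}$ for every $k$, so $\omega \in \mathcal{W}^+$ and thus $\omega \notin \mathcal{W}$.

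In slightly more detail, to see closure under finite unions it suffices to treat two sets and iterate. Suppose $x_0, x_1 \notin \mathcal{W}^+$; I want to show $x_0 \cup x_1 \notin \mathcal{W}^+$. This need not hold for arbitrary sets — indeed $\mathcal{W}^+$ is not a filter, so the complement formulation matters — but the correct statement is the ideal one: $x_0, x_1 \in \mathcal{W}$ implies $x_0 \cup x_1 \in \mathcal{W}$. Equivalently, in contrapositive: if $x_0 \cup x_1 \in \mathcal{W}^+$ then $x_0 \in \mathcal{W}^+$ or $x_1 \in \mathcal{W}^+$. Given $x_0 \cup x_1 \supseteq y$ a progression of every finite length, consider the $2$-coloring $c$ of $y$ by membership in $x_0$; by Van der Waerden's theorem applied inside the (arbitrarily long) arithmetic progressions that are subsets of $x_0 \cup x_1$ — using that a sufficiently long progression in $y$ contains, by the theorem, a length-$k$ subprogression monochromatic for $c$ — one of $x_0 \cap y$, $x_1 \cap y$ contains progressions of every length $k$. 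Since a subprogression of an arithmetic progression is again an arithmetic progression, that piece lies in $\mathcal{W}^+$.

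The only genuine content here is Van der Waerden's theorem, which the Fact explicitly cites as the input, so there is no real obstacle; the main thing to be careful about is phrasing the quantifiers correctly, namely extracting, for a fixed target length $k$, a long enough ambient progression inside $x_0 \cup x_1$ and then pigeonholing via the finite Van der Waerden number $W(2,k)$ to land a length-$k$ monochromatic subprogression. I would therefore present the proof as: (i) a one-line check of upward closure of $\mathcal{W}^+$; (ii) the contrapositive reduction of ``$\mathcal{W}$ closed under finite unions'' to a statement about a single progression being coloured; (iii) invocation of Van der Waerden plus the observation that subprogressions of progressions are progressions; (iv) properness via $\{0,\dots,k-1\} \subseteq \omega$. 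This keeps the argument to a short paragraph, matching the paper's evident intent that the Fact be essentially a restatement of Van der Waerden's theorem.
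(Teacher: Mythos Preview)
Your argument is correct and is precisely the standard verification the paper has in mind: the paper gives no proof of this Fact at all, merely prefacing it with ``The following fact follows from Van der Waerden's Theorem which is well known.'' Your write-up supplies exactly those routine details (downward closure of $\mathcal{W}$, properness, and closure under finite unions via the finite Van der Waerden numbers $W(2,k)$), so there is nothing to compare. One small point worth making explicit in your step (iii): from ``for every $k$ one of $x_0,x_1$ contains a length-$k$ progression'' you need a pigeonhole to fix a single $x_i$ that works for all $k$; this is immediate since a length-$k$ progression contains length-$j$ subprogressions for every $j\le k$, so whichever $x_i$ works for infinitely many $k$ works for all $k$.
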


\begin{proof}[Proof of Theorem~\ref{pi11ppoint}]
Let $(y_\alpha)_{\alpha< \omega_1}$ enumerate $[\omega]^\omega$ via the global $L$ well-order $<_L$. The statement ``$y$ is the $\alpha$'th element according to $<_L$'' is absolute between $L_\beta$'s with $y \in L_\beta$ and $\alpha \in L_\beta$. Let $O \colon 2^\omega \to 2^\omega$ be the following lightface Borel function: If $x \subseteq \omega$ we want to define a unique sequence $(i_n)_{n \in \omega}$ of subsets of $\omega$ so that $\max i_n < \min i_{n+1}$ and $i_{n+1}$ is the next maximal arithmetic progression in $x$ of length $\geq 3$ above $\max i_{n}$ (note that any pair of natural numbers forms an arithmetic progression). Now if this sequence can be defined up to $\omega$ (in particular every $i_n$ is finite), then we define $O(x)(n) = 1$ iff $i_n$ has even length. Else we let $O(x)(n)=0$. 

 We construct a sequence $(x_\xi, \delta_\xi)_{\xi < \omega_1}$ where $x_\xi \in [\omega]^\omega$, $\delta_\xi < \omega_1$ as follows.
 
  Given $(x_\xi, \delta_\xi)_{\xi < \alpha}$ we let $\delta_\alpha$ be the least limit ordinal such that $ \sup_{\xi < \alpha}{\delta_\xi} < \delta_\alpha $, $y_\alpha \in L_{\delta_\alpha}$ and $\delta_\alpha$ projects to $\omega$, i.e. $L_{\delta_\alpha + \omega} \models \delta_\alpha \text{ is countable}$. It is not difficult to see that the set of ordinals projecting to $\omega$ is unbounded in $\omega_1$. $x_\alpha = x$ is chosen least in the $<_L$ well-order so that 
  \begin{enumerate}[(a)]
   \item $x \subseteq^* x_\xi$ for every $\xi < \alpha$, 
   \item $x \in \mathcal{W}^+$
   \item $x \subseteq y_\alpha$ or $x \subseteq \omega \setminus y_\alpha$.
   \item $O(x)$ codes $\delta_\alpha$. 
  \end{enumerate}

  Note that any sequence $(x_\xi)_{\xi < \omega_1}$ defined as above is a tower generating an ultrafilter. 
  
  \begin{claim}
   $x_\alpha$ can be found in $L_{\delta_\alpha+ \omega}$.
  \end{claim}

  \begin{proof}
  Note that the definition of $(x_\xi)_{\xi < \alpha}$ is absolute between $L_\beta$'s. In particular $(x_\xi)_{\xi < \alpha}$ can be defined over $L_{\delta_\alpha}$. As $\delta_\alpha$ projects to $\omega$, there is an enumeration $(x^n)_{n \in \omega}$ of $\{x_\xi:  \xi < \alpha\}$ in $L_{\delta_\alpha+ \omega}$. Given $y_\alpha$ we have that, as $\mathcal{W}$ is an ideal, that for every $\xi < \alpha$, $y_\alpha \cap x_\xi \in \mathcal{W}^+$ or $\omega \setminus y_\alpha \cap x_\xi \in \mathcal{W}^+$. Assume wlog that for cofinally many $x_\xi$, $y_\alpha \cap x_\xi \in \mathcal{W}^+$ is the case. This implies that for all $x_\xi$ this is the case as $(x_\xi)_{\xi < \alpha}$ forms a tower. Again as $\delta_\alpha$ projects to $\omega$, there is a real $z \in L_{\delta_\alpha+ \omega} \cap 2^\omega$ coding $\delta_\alpha$. Now we define a sequence $(i_n)_{n\in \omega}$ of finite subsets of $\omega$ so that $\max i_n < \min i_{n+1}$, $i_n \subseteq y_\alpha \cap \bigcap_{k \leq n} x^k$, $i_n$ consists of an arithmetic progression so that its length is $\geq n$ and it is even iff $z(n) = 1$. Moreover $\min i_n$ is chosen large enough so that $i_{n-1} \cup i_n$ cannot form an arithmetic progression. $x := \bigcup_{n \in \omega} i_n$ can be defined in $L_{\delta_\alpha+ \omega}$ and satisfies (a)-(d). Thus in particular the $<_L$-least such $x$ exists in $L_{\delta_\alpha + \omega}$.  
  \end{proof}
  
 \begin{remark}
  There is a formula $\varphi(x)$ in the language of set theory so that $\varphi(x)$ iff $\exists \xi (x = x_\xi)$ and $L_{\beta} \models \varphi(x)$ for some $\beta$ implies that $\varphi(x)$ is true. Moreover $L_{\delta_\xi + \omega} \models \varphi(x_\xi)$ for every $\xi$. 
 \end{remark}

 \begin{proof}
 $\varphi(x)$ expresses that there is an ordinal $\alpha$ and a sequence $(x_\xi, \delta_\xi)_{\xi \leq \alpha}$ according to the recursive definitions given above so that $x = x_\alpha$.
 \end{proof}

 Now we can check that the set $X = \{x_\xi : \xi \in \omega_1 \}$ is $\Pi^1_1$. Let $C$ and $(\cdot)^{+\omega}$ be as in Lemma~\ref{lem:Lalpha[x]isborel} and Lemma~\ref{lem:plusomega}. 
  Then $x \in X$ iff $$ O(x) \in \WO \text{ and }\forall z [\neg C(O(x)^{+\omega},0,z) \text{ or } (\omega,E_z) \models \varphi(x)] .$$
\end{proof}

\begin{definition}
 The ideal $\Fin^2$ on $\omega \times \omega$ consists of $x \in \mathcal{P}(\omega \times \omega)$ so that $\forall^\infty n \in \omega \forall^\infty m \in \omega (\langle n,m \rangle \notin x)$
\end{definition}

\begin{proof}[Proof of Theorem~\ref{pi11qpoint}]
 The ultrafilter that we construct will live on $\omega \times \omega$. Let $O \colon (\Fin^2)^+ \to 2^\omega$ be the following Borel function. Given $x \in (\Fin^2)^+$ let $x_0,x_1$ be the first two infinite vertical sections of $x$. We denote with $x_0(n)$ or $x_1(n)$ the $n$'th element of $x_0$ or $x_1$. Then $$O(x)(n) = \begin{cases}
                                                                                                                                                                                                                                                                                                                                                                                                                                                                                                  0 \text{ if } x_0(n)\geq x_1(n) \\
                                                                                                                                                                                                                                                                                                                                                                                                                                                                                                  1 \text{ if } x_1(n) > x_0(n).
                                                                                                                                                                                                                                                                                                                                                                                                                                                                                            \end{cases}
$$

As in the proof of Theorem~\ref{pi11ppoint} we let $(y_\alpha)_{\alpha < \omega_1}$ enumerate $[\omega \times \omega]^\omega$  and $(P_\alpha)_{\alpha < \omega_1}$ enumerate all partitions of $\omega \times \omega$ into finite sets via the well-ordering $<_L$. 

Similarly to the proof of Theorem~\ref{pi11ppoint} we construct a sequence $(x_\xi, \delta_\xi)_{\xi < \omega_1}$ where $x_\xi \in (\Fin^2)^+$, intersections of finitely many elements in $\{x_\xi : \xi < \omega_1 \}$ are in $(\Fin^2)^+$ and $\delta_\xi < \omega_1$ as follows.
 
  Given $(x_\xi, \delta_\xi)_{\xi < \alpha}$ we let $\delta_\alpha$ be the least limit ordinal such that $ \sup_{\xi < \alpha}{\delta_\xi} < \delta_\alpha $, $y_\alpha, P_\alpha \in L_{\delta_\alpha}$ and $\delta_\alpha$ projects to $\omega$, i.e. $L_{\delta_\alpha + \omega} \models \delta_\alpha \text{ is countable}$. $x_\alpha = x$ is then chosen least in the $<_L$ well-order so that 
  \begin{enumerate}[(a)]
   \item $\{ x \} \cup \{x_\xi : \xi < \alpha \} $ has all finite intersections in $(\Fin^2)^+$, 
   \item $x \in (\Fin^2)^+$, 
   \item $x \subseteq y_\alpha$ or $x \subseteq \omega \setminus y_\alpha$, 
   \item for every $a \in P_\alpha$, $\vert a \cap x \vert \leq 1$, 
   \item $O(x)$ codes $\delta_\alpha$. 
  \end{enumerate}

  Again we show that such an $x_\alpha$ exists and can be found in $L_{\delta_\alpha + \omega}$. 
  
  \begin{claim}
   $x_\alpha$ can be found in $L_{\delta_\alpha + \omega}$. 
  \end{claim}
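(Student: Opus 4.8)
The plan is to follow the corresponding claim in the proof of Theorem~\ref{pi11ppoint} almost verbatim, with $\Fin^2$ playing the role of the Van der Waerden ideal $\mathcal W$ and with the invariant ``all finite intersections from $\{x_\xi:\xi<\alpha\}$ lie in $(\Fin^2)^+$'' replacing the tower structure. As there, the recursion defining $(x_\xi,\delta_\xi)_{\xi<\alpha}$ is absolute between the $L_\beta$'s, so $(x_\xi)_{\xi<\alpha}$ is definable over $L_{\delta_\alpha}$; and since $\delta_\alpha$ projects to $\omega$, the model $L_{\delta_\alpha+\omega}$ contains an enumeration $(x^n)_{n\in\omega}$ of $\{x_\xi:\xi<\alpha\}$ as well as a real $z\in 2^\omega$ coding $\delta_\alpha$. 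It then suffices to produce, inside $L_{\delta_\alpha+\omega}$, one $x$ satisfying (a)--(e), for then the $<_L$-least such $x$ also lies in $L_{\delta_\alpha+\omega}$.

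First I would fix the ``side'' of $y_\alpha$. Since $\Fin^2$ is an ideal, for each finite $F\subseteq\omega$ at least one of $y_\alpha\cap\bigcap_{k\in F}x^k$ and $((\omega\times\omega)\setminus y_\alpha)\cap\bigcap_{k\in F}x^k$ is in $(\Fin^2)^+$; and if neither side were positive for \emph{all} finite $F$, we could choose $F_1,F_2$ witnessing the failure on the two sides and conclude that $\bigcap_{k\in F_1\cup F_2}x^k$ is a union of two sets in $\Fin^2$, contradicting the invariant. So, passing to $y_\alpha$ or its complement, we may assume $s_n:=y_\alpha\cap\bigcap_{k\le n}x^k\in(\Fin^2)^+$ for every $n$; thus $s_0\supseteq s_1\supseteq\cdots$ and each $s_n$ has infinitely many infinite vertical sections.

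Now I would build $x$ diagonally as $x=\bigcup_{n}\{c_n\}\times v_n$, where $c_0<c_1<\cdots$ and $v_n$ is an infinite subset of the $c_n$-th vertical section of $s_n$ (so $c_n$ is picked above $c_{n-1}$ among the infinitely many columns at which $s_n$ is infinite), building the sets $v_n$ one element at a time. For any $j$ we then get $x\cap\bigcap_{k\le j}x^k\supseteq\bigcup_{n\ge j}\{c_n\}\times v_n$, which again has infinitely many infinite columns, so (a) and (b) hold; and (c) holds because $x\subseteq y_\alpha$. Each new element put into some $v_n$ is chosen large enough to avoid the finitely many (finite) cells of $P_\alpha$ already met by the finite part of $x$ constructed so far, which yields (d). Finally, to secure (e) I interleave the constructions of $v_0$ and $v_1$: at step $n$ I pick the $n$-th elements of $v_0$ and $v_1$ in the order dictated by $z$ --- first one of them, then the other strictly above it --- so that $v_1(n)>v_0(n)$ iff $z(n)=1$, which is possible because both $(s_0)_{c_0}$ and $(s_1)_{c_1}$ are infinite and only finitely many cells of $P_\alpha$ are active at that step. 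Since the only infinite columns of $x$ are $c_0<c_1<\cdots$, the first two infinite vertical sections of $x$ are $v_0,v_1$, whence $O(x)=z$, which codes $\delta_\alpha$. All parameters used, namely $(x^n)_{n\in\omega}$, $y_\alpha$, $P_\alpha$, $z$, belong to $L_{\delta_\alpha+\omega}$, so the construction can be carried out there.

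I expect the main obstacle to be the bookkeeping in the last paragraph: the encoding requirement (e) pins down the relative order of $v_0(n)$ and $v_1(n)$, requirement (d) forbids two points of $x$ in a common cell of $P_\alpha$, and simultaneously each $v_n$ must stay inside a prescribed column of $s_n$ while $c_0<c_1<\cdots$ must remain the only infinite columns of $x$; the point is to schedule the choices (a new column, or a new element of $v_0,v_1,\dots$) so that at every finite stage only finitely many of these constraints are active, which is exactly what makes each individual choice possible.
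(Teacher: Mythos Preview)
Your argument is correct and follows the same strategy as the paper: use absoluteness and the fact that $\delta_\alpha$ projects to $\omega$ to work inside $L_{\delta_\alpha+\omega}$, fix the side of $y_\alpha$, diagonalize into the $x^n$'s to secure (a)--(c), avoid previously used cells of $P_\alpha$ for (d), and arrange the first two infinite vertical sections to read off a code for $\delta_\alpha$ for (e).

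The only real difference is organizational. The paper enumerates the \emph{finite intersections} of the $x_\xi$'s as $(x^n)$ and builds $x$ point by point: two fixed columns $k_0,k_1$ (the first two infinite columns of $y_\alpha$) receive the encoding points, while an enumeration $(p_j)$ of $\omega\times\omega$ with infinite repetition is used to drop, for each pair $(m,n)$, infinitely many points into the $m$'th infinite column of $y_\alpha\cap x^n$, thereby witnessing $(\Fin^2)^+$ for each $x^n$. You instead build $x$ column by column, with the $n$'th column $v_n$ living inside $s_n=y_\alpha\cap\bigcap_{k\le n}x^k$, and let the first two of these columns carry the code. Your version makes the verification of (a) slightly more transparent (since $\bigcap_{k\in F}x^k\supseteq s_{\max F}$), at the cost of the more delicate interleaved bookkeeping you describe in your last paragraph; the paper's point-by-point scheme keeps the bookkeeping linear but leaves the verification of (a) to the reader. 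Both are straightforward once the schedule is fixed.
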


  \begin{proof}
  We have that if $(x_\xi)_{\xi<\alpha}$ exists then it must be definable over $L_{\delta_\alpha}$. As $\delta_\alpha$ projects to $\omega$ there is in $L_{\delta_\alpha + \omega}$ an enumeration $(x^n)_{n\in\omega}$ of all finite intersections of elements in $\{x_\xi : \xi <\alpha \}$. We are given $y_\alpha \in L_{\delta_\alpha}$. It is not hard to see that either $y_\alpha$ or $(\omega \times \omega) \setminus y_\alpha$ is in $(\Fin^2)^+$ and has $(\Fin^2)^+$ intersection with all $x^n$. Without loss of generality we assume $y_\alpha$ has this property. Let $P_\alpha = \{ a_i : i \in \omega \}$ and $z \in 2^\omega \cap L_{\delta_\alpha + \omega}$ code $\delta_\alpha$. Further let $k_0<k_1$ be first so that the $k_0$'th and $k_1$'th vertical section of $y_\alpha$ is infinite.  Let $(p_j)_{j \in \omega}$ enumerate $\omega \times \omega$ in a way that every pair $(n,m)$ appears infinitely often.                                                                                                                                                                                                                                                                                                                                                                                                                                                                                                                                                                                                                                                                                                                                                                                                                                                                                                                                                                                                                                                                                                                                                                                                                                                                                                                                                                                                                                                                                                                                                                                                                                                                                                         Given $(p_j)_{j \in \omega}$ we define recursively a sequence $\langle m^0_i , m^1_i \rangle_{i \in \omega}$ and auxiliarily $(n_i)_{i \in \omega}$ as follows: 

\begin{itemize}
 \item for every $i$, $\langle m^0_i , m^1_i \rangle \in y_\alpha$, $\langle m^0_i , m^1_i \rangle \notin \bigcup_{j< i} a_{n_j}$ and $\langle m^0_i , m^1_i \rangle \in a_{n_i}$,
 \item if $i = 3 j$ for $j \in \omega$, then $\langle m^0_i , m^1_i \rangle$ is in the $p_j(0)$'th infinite vertical section of $y_\alpha \cap x^{p_j(1)}$ greater than $k_1$,
 \item if $i = 1 \mod 3$ then $m^0_i = k_0$ and $m^0_{i+1} = k_1$ and $m^1_i \geq m^1_{i+1}$ or $ m^1_{i+1} > m^1_i$ depending on whether $z(i) =0$ or $z(i) =1$.  
\end{itemize}

Now the set $\{\langle m_i^0,m_i^1 \rangle : i \in \omega \} \in L_{\delta_\alpha +\omega}$ satisfies (a)-(e) as can be seen from the construction. In particular $L_{\delta_\alpha + \omega}$ contains the $<_L$-least such set.                                                                                                                                                                                                                                                                                                                                                                                                                                                                                                                                                                                                                                                                                                                                                                                                                                                           
\end{proof}

The set $\{x_\xi : \xi < \omega_1 \}$ is now a base for a Q-Point and as in the proof of Theorem~\ref{pi11ppoint} it is $\Pi^1_1$. 
\end{proof}

\section{There are no $\mathbf\Pi^1_1$ Ramsey ultrafilter bases}

\label{sec:noramsey}

\begin{definition}
 Let $\mathcal{F}$ be a filter. Then the forcing $\mathbb{M}(\mathcal{F})$ consists of pairs $(a,F) \in [\omega]^{<\omega} \times \mathcal{F}$ such that $\max a < \min F$. A condition $(b,E)$ extends $(a,F)$ if $b$ is an end-extension of $a$, $E \subseteq F$ and $b \setminus a \subseteq F$.   
\end{definition}

$\mathbb{M}(\mathcal{F})$ is the natural forcing to add a pseudointersection of $\mathcal{F}$.

\begin{definition}
 Let $\mathcal{F}$ be a filter. Then we define the game $G(\mathcal{F})$ as follows: \\
 
 \begin{tabular}[h]{c | c c c c c c}
  Player I & $F_0 \in \mathcal{F}$ & & $F_1 \in \mathcal{F}$ & & \dots \\ 
  Player II & & $a_0 \in [F_0]^{<\omega} \setminus \{\emptyset \}$ & & $a_0 \in [F_1]^{<\omega} \setminus \{\emptyset \}$ & & \dots 
  
 \end{tabular} \\

Player II wins iff $\bigcup_{n \in \omega} a_n \in \mathcal{F}$. 
\end{definition}

\begin{lemma}
\label{lem:game}
 Let $\mathcal{F}$ be a filter on $\omega$. Then TFAE: 
 \begin{enumerate}[(i)]
  \item For any countable model $M$, $\mathcal{F} \in M$, of enough set theory, there is $x \in \mathcal{F}$, $\mathbb{M}(\mathcal{F})$ generic over $M$. 
  \item I has no winning strategy in $G(\mathcal{F})$. 
 \end{enumerate}
\end{lemma}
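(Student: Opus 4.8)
The plan is to prove the contrapositive in both directions, exploiting the correspondence between winning strategies for Player~I and ``bad'' objects that block genericity. First I would set up notation: fix a countable model $M$ of enough set theory with $\mathcal F \in M$, and recall that a condition $(b,E)$ forces information about the generic pseudointersection $x = \bigcup\{ b : (b,E) \in G \text{ for some } E\}$; in particular $x \subseteq^* F$ for every $F \in \mathcal F \cap M$, and dense sets in $M$ are of the form ``$(a,F)$ with $\card a \geq n$'' together with the dense sets deciding membership of each fixed $k$ in $x$. The key point is that $x \in \mathcal F$ is \emph{not} automatic: the generic only gives a pseudointersection, and whether it lands in $\mathcal F$ itself is exactly what Player~II is trying to achieve in $G(\mathcal F)$.

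For the direction $\neg(\mathrm{ii}) \Rightarrow \neg(\mathrm{i})$: suppose $\sigma$ is a winning strategy for Player~I in $G(\mathcal F)$, and choose a countable $M \ni \mathcal F, \sigma$ of enough set theory. Given any filter $x \in \mathcal F$ that is $\mathbb M(\mathcal F)$-generic over $M$, I would run a play of $G(\mathcal F)$ \emph{inside the analysis of the generic}: Player~I follows $\sigma$, producing $F_0 \in \mathcal F \cap M$; since $x \subseteq^* F_0$, Player~II can play a nonempty $a_0 \in [F_0]^{<\omega}$ consisting of elements of $x$, chosen as an initial segment of $x$ past the relevant threshold; continue, at each stage letting $a_n$ be the next block of $x$ inside $F_n = \sigma(F_0, a_0, \dots, a_{n-1}) \in \mathcal F \cap M$. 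By genericity (meeting the density requirement $\card a \geq n$ at stage $n$, available because $x$ is infinite and almost contained in each $F_n$), we can arrange $\bigcup_n a_n = x$. But $\sigma$ is winning for~I, so $\bigcup_n a_n \notin \mathcal F$, i.e.\ $x \notin \mathcal F$ --- contradicting that $x$ was chosen in $\mathcal F$. Hence no such $x$ exists and (i) fails. The one subtlety here is ensuring the play stays legal and that the $a_n$ exhaust $x$; this works because the sequence of moves can be taken to live in $M$ up to finite modifications, and the generic decides each integer.

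For the direction $\neg(\mathrm{i}) \Rightarrow \neg(\mathrm{ii})$: suppose there is a countable $M \ni \mathcal F$ of enough set theory with no $x \in \mathcal F$ that is $\mathbb M(\mathcal F)$-generic over $M$. I would describe a winning strategy for Player~I built from an enumeration $\langle D_n : n \in \omega \rangle$ of the dense subsets of $\mathbb M(\mathcal F)$ lying in $M$. Player~I maintains a decreasing-in-the-side-condition sequence of conditions $(a_{<n}, F_n')$ together with the $F_n$ he actually plays; informally, whenever Player~II answers with $a_n$, Player~I treats $(\,\bigcup_{k \le n} a_k\,, F)$ as the next condition, extends it into $D_n$ (legally, shrinking the side condition within $\mathcal F \cap M$ and only adding to the stem elements of that side condition), and plays the resulting side condition as $F_{n+1}$. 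If Player~II ever produced $a$'s whose union $x$ lay in $\mathcal F$, then $x$ would be $\mathbb M(\mathcal F)$-generic over $M$ (it meets every $D_n$ by construction and is a pseudointersection of $\mathcal F \cap M$), contradicting our assumption; therefore for \emph{every} play against this strategy $\bigcup_n a_n \notin \mathcal F$, so it is winning for~I and (ii) fails. The main obstacle is the bookkeeping in this direction: one must show that the strategy can genuinely force the run through all of $D_n$ while only ever enlarging the stem by elements Player~II was compelled to pick from the previously announced $F_k$'s --- i.e.\ that ``extend into $D_n$'' can be done by a move of the form ``shrink the side condition'', which is exactly the shape of a move of Player~I in $G(\mathcal F)$. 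This is where the precise definitions of $\mathbb M(\mathcal F)$ and of the game interlock, and it is the step I would write out most carefully.
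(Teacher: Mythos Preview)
Your plan for $\neg(\mathrm{i}) \Rightarrow \neg(\mathrm{ii})$ is essentially the paper's argument run contrapositively, with one slip: it is not $\bigcup_n a_n$ that is $\mathbb{M}(\mathcal F)$-generic over $M$, but the larger set $\bigcup_n a_n \cup \bigcup_n t_n$, where the $t_n$ are the finite pieces added to the stem when you extend into $D_n$. This does not damage the conclusion, since $\bigcup_n a_n \in \mathcal F$ implies the superset is in $\mathcal F$ as well, and that superset is the generic real; but you should state it correctly.

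The direction $\neg(\mathrm{ii}) \Rightarrow \neg(\mathrm{i})$ has a real gap. You take a generic $x \in \mathcal F$ over $M \ni \sigma$ and try to have II play successive blocks of $x$ against $\sigma$, aiming for $\bigcup_n a_n = x$. The problem is that after II plays $a_0,\dots,a_{n-1}$, Player~I's response $F_n = \sigma(\langle a_0,\dots,a_{n-1}\rangle)$ lies in $M \cap \mathcal F$, so genericity gives only $x \subseteq^* F_n$, not $x \subseteq F_n$. The finitely many elements of $x \setminus F_n$ may lie strictly above $\bigcup_{i<n} a_i$, so II cannot include them at stage $n$; and since the sequence $\langle F_n\rangle$ depends on II's moves (hence on $x$) and is not in $M$, nothing prevents these finite defects from accumulating to an infinite set. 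Your sentence ``the sequence of moves can be taken to live in $M$ up to finite modifications, and the generic decides each integer'' does not address this: each finite tuple $\langle a_0,\dots,a_{n-1}\rangle$ is in $M$, but the coherence you need across all $n$ is not automatic.

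The paper closes this gap by defining, for each $n$, the dense set
\[
D_n = \Bigl\{(s,F) : F \subseteq \bigcap\{\sigma(\langle s_0,\dots,s_{n-1}\rangle) : \langle s_0,\dots,s_{n-1}\rangle \in \dom(\sigma),\ \textstyle\bigcup_i s_i = s\}\Bigr\},
\]
which lies in $M$ since $\sigma \in M$. Genericity then yields, at each stage, an initial segment $s$ of $x$ and an $F$ with $x \setminus s \subseteq F \subseteq \sigma(\langle s_0,\dots,s_{n-1}\rangle)$ for \emph{every} legal $n$-partition of $s$. This is precisely the uniformity needed to let II play consecutive initial segments of $x$ while staying inside the $F_n$'s prescribed by $\sigma$, so that $\bigcup_n a_n = x \in \mathcal F$ and $\sigma$ cannot be winning. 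You should incorporate this (or an equivalent device) into your argument.
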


\begin{proof}
 (i) implies (ii): Suppose $\sigma$ is a winning strategy for I in $G(\mathcal{F})$ and let $\sigma, \mathcal{F} \in M$. Wlog we assume that $\sigma (\langle \rangle) = \omega$. Thus Player II is allowed to play any $a_0$ as his first move and then $\sigma$ carries on as if $a_0$ had not been played. In particular this means that any initial play $a_0$ of II is a legal move, i.e. $\langle a_0 \rangle \in \operatorname{dom}(\sigma)$.
 Consider the dense sets $D_n := \{ (s,F) : F \subseteq \bigcap \{ \sigma (\langle s_0,\dots,s_{n-1} \rangle) : \langle s_0,\dots,s_{n-1} \rangle \in \operatorname{dom}( \sigma), \bigcup_{i< k} s_i = s\} \}$ for $n \in \omega$. 
 $D_n \in M$ for every $n \in \omega$. By (i) there is $x \in \mathcal{F}$, $\mathbb{M}(\mathcal{F})$ generic over $M$. This means that for every $n \in \omega$ there is $s$ an initial segement of $x$ and $F \in \mathcal{F}$ so that $(s,F) \in D_n$ and $x \setminus s \subseteq F$.
 Now using this construct a sequence $\langle s_i \rangle_{i \in \omega}$ and $\langle F_i \rangle_{i \in \omega}$ recursively so that: 
 
 \begin{enumerate}
  \item $\bigcup_{i < n} s_i$ is an initial segment of $x$ for every $n \in \omega$,
  \item $\max s_{i} < \min s_{i+1} $ for every $i \in \omega$,
  \item $x \setminus \bigcup_{i < n} s_i \subseteq F_n$ for every $n \in \omega$,
  \item $(\bigcup_{i < n} s_i, F_n) \in D_{n}$.
 \end{enumerate}

 We find recursively that $\langle s_i \rangle_{i < n} \in \operatorname{dom}(\sigma)$, i.e. $\langle s_i \rangle_{i < n}$ is a legal move. But $\bigcup_{i \in \omega} s_i = x \in \mathcal{F}$ contradicting $\sigma$ being a winning strategy for I. 

 (ii) implies (i): Let $M \ni \mathcal{F}$ be countable and $\langle D_n \rangle_{n \in \omega}$ enumerate all dense subsets of $\mathbb{M}(\mathcal{F})$ in $M$. We describe a strategy for Player I: I starts by playing some $F_0$ so that there is $(t_0, F_0) \in D_0$. Then Player II will play $a_0 \subseteq F_0$, i.e. $(t_0 \cup a_0, F_0) \leq (t_0,F_0)$. Now I plays $F_1$ so that there is $(t_0 \cup a_0 \cup t_1, F_1) \in D_1$, $(t_0 \cup a_0 \cup t_1, F_1)\leq (t_0 \cup a_0, F_0)$...
 
 By assumption there is a winning run $\langle a_i \rangle_{i \in \omega}$ for II according to this strategy. This means that $\bigcup a_i \in \mathcal{F}$ and moreover $x = \bigcup a_i \cup \bigcup t_i \in \mathcal{F}$ where $t_i$ are as described. But $x$ is now $\mathbb{M}(\mathcal{F})$ generic over $M$. 
 \end{proof}

It is a well known theorem that for ultrafilters $\mathcal{U}$, I not having a winning strategy in $G(\mathcal{U})$ is equivalent to $\mathcal{U}$ being a P-point. For sake of completeness we prove a more general (in light of Lemma~\ref{lem:game}) version of this below. Recall that $\mathfrak{p}$ is the pseudointersection number, i.e. the least size of a set $\mathcal{B} \subseteq [\omega]^\omega$ with the finite intersection property and no pseudointersection, a set $x \in [\omega]^\omega$ such that $x \subseteq^* y$ for all $y \in \mathcal{B}$. The bounding number $\mathfrak{b}$ is the least size of a family $\mathcal{B} \subseteq \omega^\omega$ such that there is no $f \in \omega^\omega$ eventually dominating every member of $\mathcal{B}$. It is well known that $\aleph_1 \leq \mathfrak{p} \leq \mathfrak{b}$. An ultrafilter $\mathcal{U}$ is called a $P_\kappa$ point if for any $\mathcal{B} \in [\mathcal{U}]^{<\kappa}$ there is a pseudointersection $x \in \mathcal{U}$ of $\mathcal{B}$. In particular a $P$-point is the same as a $P_{\aleph_1}$-point.

\begin{lemma}
 Assume $\kappa \leq \mathfrak{p}$ and $\mathcal{U}$ is an ultrafilter. Then TFAE: 
 \begin{enumerate}[(i)]
  \item $\mathcal{U}$ is a $P_{\kappa}$-point.
  \item For every $M$ a model of enough set theory with $\card{M} < \kappa$ and $\mathcal{U} \in M$, there is $x \in \mathcal{U}$ which is $\mathbb{M}(\mathcal{U})$ generic over $M$. 
 \end{enumerate}

\end{lemma}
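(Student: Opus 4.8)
The plan is to prove both implications using the characterization of $P_\kappa$-points via pseudointersections and the density argument underlying $\mathbb{M}(\mathcal{U})$-genericity. The key observation linking the two is that a filter $x$ that is $\mathbb{M}(\mathcal{U})$-generic over $M$ is, in particular, a pseudointersection of $\mathcal{U} \cap M$: for each $F \in \mathcal{U} \cap M$ the set $\{(a, E) : E \subseteq F\}$ is dense in $\mathbb{M}(\mathcal{U})$ and lies in $M$, so meeting it forces $x \setminus a \subseteq F$ for some finite $a$, i.e. $x \subseteq^* F$. Conversely, a pseudointersection of $\mathcal{U} \cap M$ that lies in $\mathcal{U}$ can be threaded through the countably-or-fewer-many dense sets of $M$ by a bookkeeping recursion. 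I would set up the proof so that the direction (ii) $\Rightarrow$ (i) is essentially immediate from this observation, and put the real work into (i) $\Rightarrow$ (ii).

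For (i) $\Rightarrow$ (ii): fix $M$ with $|M| < \kappa$ and $\mathcal{U} \in M$, and let $\langle D_\gamma : \gamma < |M| \rangle$ enumerate the dense subsets of $\mathbb{M}(\mathcal{U})$ lying in $M$. First, since $\mathcal{U}$ is a $P_\kappa$-point and $\mathcal{U} \cap M$ has size $< \kappa$, pick $w \in \mathcal{U}$ a pseudointersection of $\mathcal{U} \cap M$. Now I want to build a condition $(a_0, F_0) \in D_0$, then $(a_1, F_1) \leq (a_0, F_0)$ in $D_1$, and so on, whose union of finite stems, together with a tail of $w$, forms the generic real inside $\mathcal{U}$. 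The subtle point is that the recursion has length $|M|$, which may be uncountable, so there is no single descending $\omega$-sequence of conditions whose "limit" is the generic; instead I would argue as in the (ii)$\Rightarrow$(i) half of Lemma~\ref{lem:game} but using that $\mathcal{U}$ being a $P_\kappa$-point lets us take pseudointersections at limit stages. Concretely: at successor stages extend the current condition into $D_\gamma$; at limit stages $\lambda < |M|$, the stems $\bigcup_{\gamma < \lambda} a_\gamma$ form an initial segment of a fixed infinite set, and the side conditions $F_\gamma$ ($\gamma < \lambda$) together with $w$ are $< \kappa$ many sets in $\mathcal{U}$, so by $\mathfrak{p} \geq \kappa$ they have a pseudointersection, which we may assume lies in $\mathcal{U}$ (here I would intersect with $w$ and use that $\mathcal{U}$ is a $P_\kappa$-point, or just that $\mathcal{U} \cap M$ plus finitely many more sets still has a pseudointersection in $\mathcal{U}$), giving a legitimate side condition to continue past $\lambda$. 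After all $|M|$ stages, $x := \bigcup_{\gamma} a_\gamma$ (suitably unioned with the stems, so that $x$ is infinite and $x \setminus \bigcup_{\gamma < \gamma_0} a_\gamma \subseteq F_{\gamma_0}$ for all $\gamma_0$) is $\mathbb{M}(\mathcal{U})$-generic over $M$, and $x \subseteq^* F$ for every $F \in \mathcal{U} \cap M$, so $x \in \mathcal{U}$ because $\mathcal{U}$ is an ultrafilter and $x$ is forced into it by density.

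For (ii) $\Rightarrow$ (i): given $\mathcal{B} \in [\mathcal{U}]^{<\kappa}$, choose $M$ of size $< \kappa$ with $\mathcal{U} \in M$ and $\mathcal{B} \subseteq M$ (possible since $|\mathcal{B}| < \kappa$ and we only need $\mathcal{B}$ as a subset, not as an element, of $M$; alternatively enlarge $M$ so that $\mathcal{B} \in M$ as well, which is fine as $|\mathcal{B}| < \kappa \leq \mathfrak{p}$ is no larger than what $M$ can absorb). By (ii) there is $x \in \mathcal{U}$ which is $\mathbb{M}(\mathcal{U})$-generic over $M$; by the density observation above, $x \subseteq^* F$ for every $F \in \mathcal{U} \cap M \supseteq \mathcal{B}$, so $x$ is the desired pseudointersection in $\mathcal{U}$, witnessing that $\mathcal{U}$ is a $P_\kappa$-point.

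The main obstacle is the limit-stage argument in (i) $\Rightarrow$ (ii): unlike the countable case in Lemma~\ref{lem:game}, where Player I's strategy produces a single $\omega$-indexed run, here the recursion through the dense sets of $M$ may have uncountable length, and we must be careful that (a) the finite stems $a_\gamma$ accumulate to an \emph{infinite} set — so some bookkeeping forcing infinitely many stems to be nonempty is needed, e.g. interleaving the dense sets $E_n = \{(a,F) : |a| \geq n\}$ — and (b) the side conditions remain in $\mathcal{U}$ past every limit, which is exactly where the hypothesis $\kappa \leq \mathfrak{p}$ together with $\mathcal{U}$ being a $P_\kappa$-point is used, rather than merely $\mathcal{U}$ being an ultrafilter. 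I would take care to state precisely at each limit which $< \kappa$ sets need a common pseudointersection in $\mathcal{U}$ and cite the $P_\kappa$-point property (closing under the pseudointersection to stay inside $\mathcal{U}$, which works since a pseudointersection of $\mathcal{U} \cap M$ that is itself in $[\omega]^\omega$ can be refined to one in $\mathcal{U}$ using ultrafilter-ness on the relevant partition).
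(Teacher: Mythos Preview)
Your direction (ii)$\Rightarrow$(i) is fine and matches the paper (which calls it trivial). The direction (i)$\Rightarrow$(ii), however, has a genuine gap at limit stages that cannot be repaired along the lines you sketch.

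The problem is not the side condition but the stem. A condition in $\mathbb{M}(\mathcal{U})$ is a pair $(a,F)$ with $a$ \emph{finite}. In your recursion, once you have passed through the dense sets $E_n=\{(a,F):|a|\ge n\}$ for all $n<\omega$, the accumulated stem $\bigcup_{n<\omega}a_n$ is already infinite, so there is no condition $(a_\omega,F_\omega)\in\mathbb{M}(\mathcal{U})$ extending all the previous ones, regardless of how cleverly you choose $F_\omega$. At that point the infinite set $x_\omega=\bigcup_n a_n$ is fixed, and meeting a later dense set $D$ would require some $(b,E)\in D$ with $b$ an initial segment of $x_\omega$ and $x_\omega\setminus b\subseteq E$; density of $D$ in $\mathbb{M}(\mathcal{U})$ gives no reason such a condition exists. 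So a straight transfinite descent through the $|M|$-many dense sets cannot get past stage $\omega$, and your remark that ``the stems $\bigcup_{\gamma<\lambda}a_\gamma$ form an initial segment of a fixed infinite set'' is precisely the obstruction, not a cure.

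The paper's argument avoids this entirely. From the $P_\kappa$-point property it first fixes a pseudointersection $U\in\mathcal{U}$ of $\mathcal{U}\cap M$, then uses $|M|<\kappa\le\mathfrak{p}\le\mathfrak{b}$ to dominate, by a single $f$, functions $f_{D,V}$ measuring how far one must look to extend into each dense $D\in M$ below each $V\in\mathcal{U}\cap M$. The iterates of $f$ give an interval partition $\langle I_n\rangle$, and the generic $x$ is built so that on even intervals it agrees with $U$ (this is what puts $x$ into $\mathcal{U}$), while the odd intervals are the free coordinates of an auxiliary $\sigma$-centered poset $\mathbb{P}$. Each dense $D\in M$ for $\mathbb{M}(\mathcal{U})$ yields a dense $\tilde D\subseteq\mathbb{P}$, and Bell's theorem (since $|M|<\kappa\le\mathfrak{p}$) produces a filter meeting all the $\tilde D$'s at once. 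The $<\kappa$-many requirements are thus handled in a single application of $\mathfrak{p}$, not by a transfinite chain of $\mathbb{M}(\mathcal{U})$-conditions; this is where the hypothesis $\kappa\le\mathfrak{p}$ does its real work, and it is exactly the ingredient your sketch is missing.
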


\begin{proof}
 (ii) implies (i) is trivial. 
 
 (i) implies (ii): Let $\card{M} < \kappa \leq \mathfrak{p}$. Then as $\mathcal{U}$ is a P$_\kappa$-point, there is $U \in \mathcal{U}$ so that $U \subseteq^* V$ for every $V \in M \cap \mathcal{U}$. Define for every $D \in M$, which is a dense open subset of $\mathbb{M}(\mathcal{U})$ and every $V \in M \cap \mathcal{U}$ a function $f_{D,V} \colon \omega \to \omega$ so that for $n \in \omega$: $$\forall a \subseteq n \exists b \subseteq [n,f_{D,V}(n)) \exists V' \in M \cap \mathcal{U} ((a \cup b, V')\leq (a,V) \wedge (a \cup b, V') \in D \wedge U \setminus f_{D,V}(n) \subseteq V'). $$
 The set of functions $f_{D,V}$ is smaller than $\kappa \leq \mathfrak{p} \leq \mathfrak{b}$. Thus there is one $f \in \omega^\omega$ dominating all of them. Let $i_0 = 0$, $i_{n+1} = f(i_n)$. We write $I_n = [i_n,i_{n+1})$. As $\mathcal{U}$ is an ultrafilter, either $U_0 = \bigcup_{n \in \omega} I_{2n} \cap U$ or $U_1 = \bigcup_{n \in \omega} I_{2n +1} \cap U$ is in $\mathcal{U}$. Assume wlog that $U_0 \in \mathcal{U}$. 
 
 We define a $\sigma$-centered partial order $\mathbb{P}$ as follows. $\mathbb{P}$ consists of pairs $(s,F)$ where \begin{enumerate}
                                                                                                                    \item $s \colon n \to [\omega]^{<\omega}$ for some $n \in \omega$, 
                                                                                                                    \item $s(i) \subseteq I_i$ for every $i < n$, 
                                                                                                                                                                                                                                  \item $s(i) = U \cap I_i$ when $i$ is even,                                                                                                                                                                                                                             
                                                                                                                                                                                                                                \item $F \in \mathcal{U} \cap M$. 
                                                                                                                                                                                                                                 \end{enumerate}
                                                                                                                   
                                                                                                                   A condition $(t,F)$ extends $(s,E)$ iff $t \supseteq s$, $F \subseteq E$ and $(t(i) \subseteq E)$ whenever $i \in \dom t \setminus \dom s$ is odd. For any $D \in M$ which is dense in $\mathbb{M}(\mathcal{U})$ we define a subset of $\mathbb{P}$, $\tilde{D}$ as follows: 
                                                                                                                   $$ \tilde{D} = \{ (t,F) : (\bigcup_{i \in \dom t} t(i), F)\in D \}.$$

                                                                                                                   We claim that $\tilde D$ is dense in $\mathbb{P}$. Let $(s,E) \in \mathbb{P}$ be arbitrary. Then as $f_{D,E} <^* f $ there is $n \in \omega$ so that $[i_{2n+1},f_{D,E}(i_{2n+1})) \subseteq [i_{2n+1},i_{2n+2})$ and $2n+1 \geq \dom s$. Now extend $s$ to $s_0$ so that $\dom s_0 = 2n+1$ and $s_0(i) = \emptyset$ for $i \in  2n+1 \setminus  \dom s$ odd and $s_0(i) = U \cap I_i$ for $i$ even. By definition of $f_{D,E}$ there is $b \subseteq I_{2n+1}$ so that $\exists F \subseteq E$ with $(a \cup b, F) \in D$ where $a = \bigcup_{i<2n+1} s_0(i)$, $(a \cup b, F) \leq (a,E)$ and $U \setminus i_{2n+2} \subseteq F$. Let $t = s_0 \cup \{(2n+1,b) \}$. Then $(t,F) \leq (s,E)$ in $\mathbb{P}$ and $(t,F) \in \tilde{D}$. 
                                                                                                                   
                                                                                                                   Now as $\kappa \leq \mathfrak{p}$ and by Bell's theorem (see \cite{Blass2010}) there is a $\mathbb{P}$ generic real $g \colon \omega \to [\omega]^{<\omega}$ over $M$. But then $x := \bigcup_{i \in \omega} g(i) \in \mathcal{U}$ as $U_0 \subseteq x$ and $x$ is $\mathbb{M}(\mathcal{U})$ generic over $M$.  
\end{proof}

\begin{cor}
\label{cor:ppointgeneric}
 Suppose $\mathcal{U}$ is a P-point, $M$ countable and $\mathcal{U} \in M$. Then there is $x \in \mathcal{U}$, $\mathbb{M}(\mathcal{U})$ generic over $M$.
\end{cor}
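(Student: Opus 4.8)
The plan is to obtain this as an immediate specialization of the preceding lemma, taking $\kappa = \aleph_1$. All the needed ingredients are already recorded: it is a theorem of ZFC that $\aleph_1 \le \mathfrak{p}$ (part of the observation ``$\aleph_1 \leq \mathfrak{p} \leq \mathfrak{b}$'' noted above), and a P-point is by definition the same thing as a $P_{\aleph_1}$-point. Since $M$ is countable, $\card{M} < \aleph_1$. So I would apply the lemma with this $\kappa$ and our given P-point $\mathcal{U}$: the hypothesis $\kappa \le \mathfrak{p}$ holds, $\mathcal{U}$ is an ultrafilter, and item (i) holds because $\mathcal{U}$ is a $P_{\aleph_1}$-point. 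Hence item (ii) holds, and instantiating it at the fixed countable $M \ni \mathcal{U}$ yields precisely an $x \in \mathcal{U}$ that is $\mathbb{M}(\mathcal{U})$ generic over $M$.

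The remaining points are pure bookkeeping: that ``a model of enough set theory'' in the corollary refers to the same finite fragment as in the lemma (it does, since the same genericity argument is being invoked), and that countability of $M$ really gives $\card{M} < \aleph_1$ on the nose, so no strengthening of $\kappa$ is required. I expect no obstacle whatsoever here — the mathematical content lives entirely in the lemma, and this corollary simply isolates the case that will be used in Section~\ref{sec:noramsey}, where, for a hypothetical $\mathbf\Pi^1_1$ Ramsey (hence P-point) ultrafilter base and a suitable countable (elementary) submodel, one wants a generic pseudointersection that still belongs to $\mathcal{U}$.
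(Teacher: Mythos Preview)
Your proposal is correct and matches the paper's intent exactly: the corollary is stated without proof precisely because it is the $\kappa=\aleph_1$ instance of the preceding lemma, using that $\aleph_1\le\mathfrak{p}$ and that a P-point is a $P_{\aleph_1}$-point.
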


\begin{lemma}[see {\cite[Chapter 24]{Halbeisen2012}}]
\label{lem:ramseygeneric}
 Assume $\mathcal{U} \in M$ is a Ramsey ultrafilter and $x$ is $\mathbb{M}(\mathcal{U})$ generic over $M$. Then every $y \subseteq^* x$ is $\mathbb{M}(\mathcal{U})$ generic over $M$. 
\end{lemma}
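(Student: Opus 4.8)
The plan is to reduce the statement to the classical description, due to Mathias, of the $\mathbb{M}(\mathcal{U})$-generic reals over a countable model when $\mathcal{U}$ is Ramsey: \emph{if $\mathcal{U}\in M$ is a Ramsey ultrafilter and $M$ is countable, then an infinite $x\subseteq\omega$ is $\mathbb{M}(\mathcal{U})$-generic over $M$ if and only if $x$ diagonalises $\mathcal{U}\cap M$, i.e.\ $x\subseteq^* A$ for every $A\in\mathcal{U}\cap M$.} This is the content of \cite[Chapter 24]{Halbeisen2012}. Granting it, the lemma is immediate: if $x$ is $\mathbb{M}(\mathcal{U})$-generic over $M$ then $x\subseteq^* A$ for all $A\in\mathcal{U}\cap M$, and hence any infinite $y$ with $y\subseteq^* x$ satisfies $y\subseteq^* A$ for all such $A$ by transitivity of $\subseteq^*$; applying the characterisation in the other direction gives that $y$ is $\mathbb{M}(\mathcal{U})$-generic over $M$.

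So all the content sits in the characterisation, and I recall its two directions. The forward direction does not use that $\mathcal{U}$ is Ramsey: for $A\in\mathcal{U}\cap M$ the set $D_A=\{(a,F)\in\mathbb{M}(\mathcal{U}):F\subseteq A\}$ is dense (extend $(a,F)$ to $(a,F\cap A)$, using $F\cap A\in\mathcal{U}\cap M$) and lies in $M$, so the generic filter over $M$ meets $D_A$, yielding $(a,F)$ with $x\setminus a\subseteq F\subseteq A$, whence $x\subseteq^* A$. The backward direction is where selectivity is used. Given a dense open $D\in M$, one uses that $\mathcal{U}$ is Ramsey — via a diagonalisation of sets $A_a\in\mathcal{U}\cap M$ attached to finite stems $a$, each deciding $D$ for that stem — to obtain a single $A_D\in\mathcal{U}\cap M$ that \emph{captures} $D$: for every finite initial segment $a$ of something lying on $A_D$, either the condition $(a,A_D\setminus(\max a+1))$ is already in $D$, or no extension of it obtained by adjoining further elements of $A_D$ lies in $D$. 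Since $D$ is dense, the second alternative cannot persist for all initial segments of an $x$ whose tail is contained in $A_D$; so for a sufficiently long initial segment $a$ of $x$ the condition $(a,A_D\setminus(\max a+1))$ is in $D$ and belongs to the filter determined by $x$ (the conditions $(a,F)$ with $a$ an initial segment of $x$ and $x\setminus a\subseteq F$). As $D$ was arbitrary, this filter is generic over $M$, so $x$ is $\mathbb{M}(\mathcal{U})$-generic over $M$.

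Putting the two directions together — the forward one applied to $x$, the backward one to $y$ — yields the lemma. The one point to watch is that the characterisation has to be run inside $M$, so one uses that $M\models$ ``$\mathcal{U}$ is a Ramsey ultrafilter'', which is exactly the hypothesis, and, in the backward direction, that the capturing set $A_D$ is an element of $\mathcal{U}\cap M$; the latter is automatic because the whole thinning construction takes place in $M$. The main obstacle is therefore the backward direction of the characterisation, i.e.\ the capturing lemma for $D$, which is the step that genuinely needs $\mathcal{U}$ to be Ramsey and not merely a P-point.
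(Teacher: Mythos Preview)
The paper does not actually prove this lemma; it is stated with a citation to \cite[Chapter~24]{Halbeisen2012} and used as a black box in the proof of Theorem~\ref{nopi11ramsey}. Your proposal correctly identifies and reconstructs the standard argument behind that citation---namely, Mathias's characterisation of $\mathbb{M}(\mathcal{U})$-generics over $M$ (for $\mathcal{U}$ Ramsey) as precisely the infinite sets almost contained in every element of $\mathcal{U}\cap M$---from which the lemma is immediate by transitivity of $\subseteq^*$. So your approach is exactly the intended one.

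One minor imprecision worth tightening in the backward direction: when you apply the capturing set $A_D$ to an arbitrary $y$ with $y\subseteq^* A_D$, the initial segments $a$ of $y$ need not lie inside $A_D$, so the dichotomy as you phrased it (``for every finite initial segment $a$ of something lying on $A_D$'') does not literally apply. The usual fix is either to state the capturing property for \emph{all} finite stems $a$ (not just those contained in $A_D$), or to first pass to the tail $y\setminus n\subseteq A_D$ and argue for stems of the form $a_0\cup b$ with $a_0=y\cap n$ fixed and $b\subseteq A_D$. This is a routine adjustment and not a genuine gap; the Prikry-type pure-decision argument in Halbeisen handles it.
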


\begin{proof}[Proof of Theorem~\ref{nopi11ramsey}]
 Suppose $\mathcal{U}$ is a Ramsey ultrafilter with a coanalytic base $X \subseteq [\omega]^\omega$. As $X$ is coanalytic, there is a continuous function $f : 2^\omega \to 2^\omega$ so that $$x \in X \leftrightarrow f(x) \in \WO.$$ Let $M$ be a countable model elementary in some $H(\theta)$ where $\theta$ is large enough and $\mathcal{U}, f \in M$. As $\mathcal{U}$ is a P-point and by Corollary~\ref{cor:ppointgeneric}, there is $x \in \mathcal{U}$ that is $\mathbb{M}(\mathcal{U})$ generic over $M$. Moreover as $\mathcal{U}$ is Ramsey and by Lemma~\ref{lem:ramseygeneric}, any $y \subseteq^* x$ is also generic over $M$. Let $\alpha = M \cap \omega_1$ and let $y \in X$ be arbitrary such that $y \subseteq^* x$. 
 Let $\beta = \|f(y)\|$, then $\beta \in M[y]$. Thus $\beta < \alpha = M[y] \cap \omega_1$. As $y$ was arbitrary, we have shown that the set $X'= \{ y : f(y) \in WO \wedge \| f(y)\| \leq \alpha \} \subseteq X$ contains $\{ y \subseteq^* x : y \in X  \}$. This means that $X'$ also generates $\mathcal{U}$. But $X'$ is Borel and cannot generate an ultrafilter. 
\end{proof}

\section{$\mathbf\Delta^1_2$ versus $\mathbf\Pi^1_1$}

Using a result of Shelah we can show the following. 

\begin{thm}
It is consistent that every P-point is $\mathbf{\Delta}^1_2$ and has no $\mathbf{\Pi^1_1}$ base. 
\end{thm}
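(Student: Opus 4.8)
The plan is to combine two ingredients: a model where the definable-sets landscape of P-points is rigid, and the no-Ramsey-base obstruction already proved, applied more carefully. Actually, Theorem~\ref{nopi11ramsey} as stated only rules out $\mathbf\Pi^1_1$ bases for \emph{Ramsey} ultrafilters, so that alone does not suffice; the role of Shelah's result must be precisely to produce a model where every P-point is $\mathbf\Delta^1_2$ and, simultaneously, no P-point carries a $\mathbf\Pi^1_1$ base. The natural candidate is a model obtained by forcing over $L$ (or over a model of $V=L$) with a countable support iteration of proper forcings of length $\omega_2$ that destroys all ``old'' P-points while preserving one carefully chosen P-point; Shelah's theorem on iteratively preserving a single P-point (the $\mathcal{P}$-point preservation theorem from \emph{Proper and Improper Forcing}) is what guarantees that in the extension there is, up to the ultrafilter generated, essentially only one P-point, and it is $\mathbf\Delta^1_2$ because it was built in an $L$-definable way and all new P-points are forced to sit above it in the Rudin--Keisler or Tukey order, hence generated by a ground-model-coded filter.

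The key steps, in order, are as follows. First I would invoke Shelah's construction: starting from $L$, build a proper $\omega_2$-length countable support iteration $\mathbb{P}_{\omega_2}$ with a bookkeeping device so that (a) every P-point of an intermediate model which is not $\subseteq^*$-generated by the ``master'' P-point $\mathcal{U}^*$ gets a pseudointersection added that splits it, while (b) $\mathcal{U}^*$ is preserved at every stage by the P-point preservation theorem. Second, I would argue that in $V=L^{\mathbb{P}_{\omega_2}}$ every P-point is equal (as a filter) to the filter generated by $\mathcal{U}^*$; since $\mathcal{U}^*$ admits a $\mathbf\Delta^1_2$ (indeed, using the Miller-style coding from Section~2 and the fact that the iteration is homogeneous and definable over $L$, a $\Delta^1_2(r)$ for a single real parameter $r$) definition, this gives that every P-point is $\mathbf\Delta^1_2$. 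Third, I would show $\mathcal{U}^*$ has no $\mathbf\Pi^1_1$ base: here one either arranges $\mathcal{U}^*$ to be Ramsey and quotes Theorem~\ref{nopi11ramsey} directly, or — if the preserved P-point cannot be taken Ramsey — one reruns the proof of Theorem~\ref{nopi11ramsey}, replacing the use of Lemma~\ref{lem:ramseygeneric} (which needs Ramseyness) by the weaker fact that for a P-point $\mathcal{U}$ preserved by the iteration, $\mathbb{M}(\mathcal{U})$-generic reals over a countable elementary $M$ remain generic after passing to $\subseteq^*$-smaller sets \emph{within $X$}, using properness/preservation to control $M\cap\omega_1$. The simplest route is to ensure $\mathcal{U}^*$ is Ramsey, since Ramsey ultrafilters can be preserved by Mathias forcing with respect to them and Shelah's machinery accommodates this.

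The main obstacle I expect is step three in the case where one wants to keep the argument self-contained: the proof of Theorem~\ref{nopi11ramsey} genuinely uses that \emph{every} $y\subseteq^* x$ with $y\in X$ is $\mathbb M(\mathcal U)$-generic over $M$, and for a mere P-point this fails. So the real work is to verify that the master ultrafilter can be chosen Ramsey and that Shelah's preservation theorem (in the form for Ramsey, equivalently, for the Mathias forcing $\mathbb M(\mathcal U^*)$ itself) survives the iteration; then Theorem~\ref{nopi11ramsey} applies verbatim to conclude $\mathcal U^*$ has no $\mathbf\Pi^1_1$ base, and since every P-point generates $\mathcal U^*$, no P-point has one. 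A secondary subtlety is checking that ``every P-point is $\mathbf\Delta^1_2$'' is not vacuous, i.e. that P-points still exist in the extension — but this is immediate since $\mathcal U^*$ is preserved. I would also remark that this simultaneously shows the $\mathbf\Delta^1_2$ P-point need not come with a $\mathbf\Pi^1_1$ base, sharpening the contrast with Theorems~\ref{pi11ppoint} and~\ref{pi11qpoint}.
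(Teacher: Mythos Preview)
Your overall strategy matches the paper's: invoke Shelah's theorem to pass from $L$ to an extension with essentially one P-point, arrange it to be Ramsey, and apply Theorem~\ref{nopi11ramsey}. However, you misstate Shelah's conclusion. The result (Theorem~XVIII.4.1 in \cite{Shelah1998}) yields an extension in which a prescribed Ramsey ultrafilter $\mathcal{U}$ from $L$ generates the unique P-point \emph{up to permutation of $\omega$}, and stays Ramsey; it does not give a literally unique P-point. Your claim that ``every P-point is equal (as a filter) to the filter generated by $\mathcal{U}^*$'' is therefore false, as is the final clause ``since every P-point generates $\mathcal{U}^*$.'' The repair is easy but must be made explicit: for any P-point $\mathcal{V}$ in the extension there is a permutation $f$ of $\omega$ with $x \in \mathcal{V} \leftrightarrow f[x] \in \mathcal{U}^*$, so $\mathcal{V}$ is $\Delta^1_2(f)$ and hence $\mathbf{\Delta}^1_2$; and since permutations preserve Ramseyness, every P-point in the extension is itself Ramsey, so Theorem~\ref{nopi11ramsey} applies to each of them directly.

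Your argument that $\mathcal{U}^*$ is $\mathbf{\Delta}^1_2$ in the extension is also vaguer than necessary. No Miller-style coding and no homogeneity of the iteration are needed. The paper simply starts with a $\Delta^1_2$ Ramsey ultrafilter $\mathcal{U}$ in $L$ and observes, via Shoenfield absoluteness, that in the extension $V$ the same formula still defines a set $\mathcal{U}^V$ with the finite intersection property containing $\mathcal{U}^L$; hence $\mathcal{U}^V$ generates the very ultrafilter that $\mathcal{U}^L$ generates, and that ultrafilter is $\Delta^1_2$.
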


\begin{proof}
 This follows immediately by \cite[Theorem XVIII.4.1]{Shelah1998} and the subsequent remark, which states that starting from $L$ we can choose any Ramsey ultrafilter $\mathcal{U}$ and pass to an extension in which $\mathcal{U}$ generates the unique P-point up to permutation of $\omega$. Moreover this ultrafilter will stay Ramsey. 
 
 Thus let $\mathcal{U}$ be any (definition of a) $\Delta^1_2$ Ramsey ultrafilter in $L$. Now apply Shelah's theorem to this ultrafilter and pass to an extension $V$ of $L$ in which $\mathcal{U}^L$ generates the unique P-point and is Ramsey. In $V$, $\mathcal{U}^V$ will still have the finite intersection property and $\mathcal{U}^L \subseteq \mathcal{U}^V$ by Shoenfield-absolutness. Thus in $V$, $\mathcal{U}^V$ generates the same ultrafilter as $\mathcal{U}^L$. As $\mathcal{U}^V$ is $\Delta^1_2$ the ultrafilter it generates will be $\Delta^1_2$ as well. We know that in $V$ there is for every P-point $\mathcal{V}$ a permutation $f$ of $\omega$ so that $V \in \mathcal{V} \leftrightarrow f(V) \in \mathcal{U}$. In particular $\mathcal{V}$ has a $\Delta^1_2(f)$ definition. On the other hand, every P-point is a Ramsey ultrafilter so none of them can have a $\mathbf{\Pi}^1_1$ base by Theorem~\ref{nopi11ramsey}.   
 
\end{proof}

\begin{proof}[Proof of Theorem~\ref{thm:delta1n}]
 To simplify notation we assume that $r=0$. Let $\mathcal{U}$ be a $\Delta^1_{n+1}$ ultrafilter. Let us introduce the following notation. For $y \in [\omega \times \omega]^\omega$, we let $y_n$ be $y$'s $n$'th vertical section. We let $z(y) = \{ n \in \omega : y_n \neq \emptyset \}$. When $z(y)$ is infinite then we denote with $y^n$, the $n$'th nonempty vertical section of $y$.  
 
 The Fubini product of $\mathcal{U}$, $\mathcal{U} \otimes \mathcal{U}$, consists of all $y \in [\omega \times \omega]^\omega$ so that $$\{n \in \omega : y_n \in \mathcal{U} \} \in \mathcal{U} .$$
 
 $\mathcal{U} \otimes \mathcal{U}$ is again an ultrafilter. We will show that it has a $\Pi^1_n$ base. Let $\varphi(x,w)$ be $\Pi^1_1$ so that $$x \in \mathcal{U} \leftrightarrow \exists w \in 2^\omega (\varphi(x,w)).$$

 Let $r : \omega \times 2^\omega \to 2^\omega$ be a recursive function such that for any sequence $\langle w_n \rangle_{ n \in \omega}$ there is $w \in 2^\omega$, which is not eventually constant, so that $r(n,w) = w_n$ for every $n \in \omega$. 
 
 Let $O \colon [\omega \times \omega]^\omega \to 2^\omega$ be the function defined by $$ O(y)(n) = \begin{cases} 0 \text{ if } \vert z(y) \vert < \omega \\
		0 \text{ if }  \min y^n \geq \min y^{n+1}\\
		1 \text{ if } \min y^n < \min y^{n+1}.
                                                                                          
                                                                                         \end{cases}
$$

$O$ is obviously lightface Borel. Let us define $X \subseteq [\omega \times \omega]^\omega$ as follows: $$y \in X \leftrightarrow \vert z(y) \vert = \omega \wedge \varphi(z(y),r(0,O(y))) \wedge \forall n \in \omega \exists s \in [\omega]^{<\omega} [\varphi(s \cup y^n,r(n+1,O(y)))].$$

$X$ is obviously $\Pi^1_n$. Moreover $X \subseteq \mathcal{U} \otimes \mathcal{U}$. To see this let us decode what $y \in X$ means. The first clause in the definition of $X$ says that $y$ has infinitely many nonempty vertical sections. The next clause ensures that $z(y) \in \mathcal{U}$ as witnessed by $r(0,O(y))$, the $0$'th real coded by $O(y)$. The last clause ensures that for every nonempty vertical section $y^n$ of $y$, $s \cup y^n$ is in $\mathcal{U}$ for some finite $s$ as witnessed by $r(n+1,O(y))$, the $n+1$'th real coded by $O(y)$. In particular $y^n \in \mathcal{U}$. Thus we indeed have that $y \in X \rightarrow y \in \mathcal{U} \otimes \mathcal{U}$.  

Moreover we have that $X$ is a base for $\mathcal{U} \otimes \mathcal{U}$. To see this fix $u \in \mathcal{U} \otimes \mathcal{U}$ and we show that there is $y \in X$ so that $y \subseteq u$. First let $y_0 = \bigcup \{ \{n \} \times u_n : n \in \omega, u_n \in \mathcal{U} \}$, i.e. we remove from $u$ the vertical sections that are not in $\mathcal{U}$. Then we let $w_0$ be such that $\varphi(z(y_0), w_0)$ holds true. Further we let $w_{n+1}$ be such that $\varphi(y_0^n, w_{n+1})$ holds true. Let $w \in 2^\omega$ be a single real coding the sequence $\langle w_n\rangle_{n \in \omega}$ via $r$, i.e. $r(n,w) = w_n$ for every $n \in \omega$. Find a sequence $\langle m_n \rangle_{n \in \omega}$ so that $m_n \in y_0^n$ for every $n$ and $w(n) = 1$ iff $m_{n+1} > m_n$. Such a sequence can be constructed recursively. Whenever $w(n) = 1$ we can simply find $m_{n+1} \in y_0^{n+1}$ large enough such that $m_{n+1} > m_n$ and if additionally $w(n+1), \dots, w(n+k)$ is a maximal block of $0$s in $w$ then we let $m_{n+1} = \dots = m_{n+k+1} \in y^{n+1} \cap \dots \cap y^{n+k+1}$. 
Finally given the sequence $\langle m_n \rangle_{n\in \omega}$ let $y = \bigcup \{\{z(y_0)(n)\} \times (y_0^n \setminus m_n)  : n \in \omega \}$, where $z(y_0)(n)$ is the $n$'th element of $z(y_0)$. We see that $y \subseteq y_0 \subseteq u$, that $z(y) = z(y_0)$, that $y^n =^* y_0^n$ for every $n$ and that $O(y) = w$. In particular $y \in X$ by definition of $X$. 

\end{proof}

\section{Adding reals}

Let $A \subseteq V$. A set $X \in V$ is called $\OD(A)$ if it is definable over $V$ from ordinals and elements of $A$ as parameters. Recall that a poset $\mathbb{P}$ is weakly homogeneous if for any $p,q \in \mathbb{P}$, there is an automorphism $\pi \colon \mathbb{P} \to \mathbb{P}$ so that $\pi(p)$ is compatible to $q$. In this section we will denote with $\mathcal{P}_A$ the collection of weakly homogeneous $\OD(A)$ posets. 

\begin{thm}
\label{thm:cohen}
 Let $c$ be a Cohen real over $V$, $\mathbb{P}\in (\mathcal{P}_V)^{V[c]}$ and $G$ a $\mathbb{P}$-generic filter over $V[c]$. Then in $V[c][G]$, $c$ is splitting over any set of reals with the finite intersection property that is $\OD(V)$. 
\end{thm}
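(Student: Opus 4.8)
The plan is to exploit a factoring/product argument together with weak homogeneity to show that no $\OD(V)$ finite-intersection-property (f.i.p.) family in the final model can decide how $c$ interacts with its sets, forcing $c$ to split each of them. First I would set up the combinatorial translation: saying that $c$ is splitting over a family $\mathcal{B}$ means that for every $a \in \mathcal{B}$ (viewed as an infinite subset of $\omega$), both $a \cap c$ and $a \setminus c$ are infinite, where we identify $c \in 2^\omega$ with the set $\{n : c(n)=1\}$. So I would assume toward a contradiction that in $V[c][G]$ there is an $\OD(V)$ family $\mathcal{B}$ with the f.i.p.\ and some $a \in \mathcal{B}$ with, say, $a \setminus c$ finite (the other case is symmetric, and the point is that by the f.i.p.\ $a$ really is infinite, so one of the two pieces being finite is the only failure mode).

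The key step is to push the statement ``$a \setminus c$ is finite and $a \in \mathcal{B}$'' down through the homogeneity of the two forcings. Since $\mathcal{B}$ is $\OD(V)$ in $V[c][G]$, membership ``$a \in \mathcal{B}$'' is, in $V[c]$, forced or refuted by a condition in $\mathbb{P}$; and since $\mathbb{P}$ is weakly homogeneous and $\OD(V)$ in $V[c]$, whether $\Vdash_{\mathbb{P}} \check a \in \mathcal{B}$ holds depends only on $a$ and on $V$-parameters, i.e.\ this is decided in $V[c]$ by an $\OD(V)$-in-$V[c]$ statement about $a$. Now I would do the same one level down: Cohen forcing $\mathrm{Add}(\omega,1)$ is itself weakly homogeneous (and $\OD(V)$, being the finite partial functions, a ground-model object), so any statement of the form ``$a$ lies in the $\OD(V)$-in-$V[c]$ set $\tilde{\mathcal{B}}$'' is, back in $V$, decided by a single condition $p \in \mathrm{Add}(\omega,1)$, and by homogeneity by the empty condition whenever $a \in V$. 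The subtlety is that $a$ itself need not be in $V$ — it lives in $V[c][G]$ — so I would instead argue about names: take a nice $\mathbb{P}\ast\dot{\mathrm{something}}$ name, or rather observe $a \in V[c][G]$ has a name $\dot a$ that is $\OD(V)$-definable from $c$ and a $\mathbb{P}$-name, and run a genericity argument on $c$ directly.

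The cleanest route, and the one I would actually carry out, is the following density argument on the Cohen real. Work in $V[c][G]$ and suppose $a \setminus c =^* \emptyset$, i.e.\ $a \subseteq^* c$. Consider the term forcing / mutual-genericity picture: because $\mathbb{P}$ is weakly homogeneous and $\OD(V)$ in $V[c]$, and because Cohen forcing is weakly homogeneous, one shows that the family $\mathcal{B}$, being $\OD(V)$, is already ``anticipated'' in $V$ in the sense that its trace on any real $a$ is computed by an $\OD(V)$ formula evaluated with parameter $a$ only — formally, there is an $\OD(V)$ class function in $V$ assigning to each real the truth value of ``$x \in \mathcal{B}$'' in $V[c][G]$, independent of the choice of generics, by the standard weak-homogeneity lemma iterated once. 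Granting that, fix $a$ and the finite set $a \setminus c$; by genericity of $c$ over $V$, the set of Cohen conditions $p$ that force some future value $c(n) = 0$ for infinitely many $n \in a$ is dense (here we use that $a$ is infinite), unless $a$ has been ``frozen'' by $p$ to be eventually inside $c$ — but to freeze $a$ one must already know $a$, and $a$ depends on $c$, so a diagonalization (build a condition that kills whichever candidate $a$ the putative name produces) yields the contradiction. Thus $a \cap (\omega \setminus c)$ is infinite; symmetrically $a \cap c$ is infinite; so $c$ splits $a$.

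\textbf{Main obstacle.} The hard part will be the mutual-genericity bookkeeping in the third paragraph: $a$ is not a ground-model real, it is produced from the name and both generics, so one cannot literally diagonalize against ``all $a \in V$''. The right way through is the iterated weak-homogeneity lemma — that a set which is $\OD(V)$ in a two-step weakly homogeneous extension $V[c][G]$ has, for each real $x$ in the extension, its membership bit computed by a fixed $\OD(V)$ formula of $x$ alone — together with the observation that the real $c$ is still ``Cohen-generic enough'' over the relevant $\OD(V)$ data to split any infinite set whose membership in $\mathcal{B}$ is governed by such a formula. Making the iterated homogeneity precise (dealing with $\mathbb{P}$ being only in $(\mathcal{P}_V)^{V[c]}$ rather than in $V$, so its automorphisms and definition involve $c$) is the delicate technical point; everything else is a routine density/splitting argument.
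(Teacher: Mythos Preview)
Your plan has a real gap, and it is exactly the one you flag as the ``main obstacle'': you never explain how to diagonalize against a set $a$ that is itself produced from $c$ and $G$. The ``iterated weak-homogeneity lemma'' you gesture at would, at best, tell you that for $x \in V$ the bit ``$x \in \mathcal{B}$'' is decided in $V$; it does not obviously control reals $a \in V[c][G] \setminus V$, and your density argument (``force $c(n)=0$ for infinitely many $n \in a$'') presupposes that $a$ is fixed while you vary the Cohen condition, which is circular. As written, the proposal is a sketch of an approach that may be salvageable but is not a proof.

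The paper avoids this difficulty altogether by a single observation you are missing: since $\mathcal{B}$ has the finite intersection property and is $\OD(V)$, the \emph{filter} $X$ it generates is still $\OD(V)$. Now ``$c$ fails to split $X$'' is simply ``$c \in X$ or $\omega \setminus c \in X$''. There is no mysterious $a$ to chase; the only real in play is $c$ itself. One then takes a Cohen condition $s \subseteq c$ and a $\mathbb{P}$-name $\dot p$ forcing (say) $\dot c \in \dot X$, and considers the bit-flip $c' := s \cup \{(n,1-c(n)) : n \geq |s|\}$. This $c'$ is still Cohen over $V$, extends $s$, and satisfies $V[c] = V[c']$; since $\mathbb{P}$ is $\OD(V)$, it is the same poset computed from either generic. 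Weak homogeneity of $\mathbb{P}$ then lets one find a single $\mathbb{P}$-generic $H$ over $V[c]$ containing conditions compatible with both $\dot p[c]$ and $\dot p[c']$, so $V[c][H] \models c \in X \wedge c' \in X$. But $c \cap c' \subseteq |s|$ is finite, contradicting the finite intersection property.

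In short: replace ``split every $a \in \mathcal{B}$'' by ``$c \notin X$ and $\omega \setminus c \notin X$'' for the generated filter $X$, and use the explicit Cohen automorphism (bit-flip above a stem) rather than an abstract diagonalization. This eliminates the dependency problem entirely.
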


\begin{proof}
 Let $X \in V[c][G]$ be an $\OD(V)$ set of reals with the finite intersection property, say $V[c][G] \models \text{``}\dot X = \{ x \in [\omega]^\omega : \varphi(x,a,\bar\alpha) \}\text{''}$ where $a \in V$ and $\bar\alpha$ is a finite sequence of ordinals. Wlog we may assume that $X$ is a filter, since the filter generated by $X$ is also $\OD(V)$. Suppose $c$ does not split $X$. This means exactly that $c \in X$ or $\omega \setminus c \in X$. Thus there is  $s \subseteq c$, deciding the formula and parameters defining $\mathbb{P}$, and $\dot p$ with $\dot p [c] \in G$, $(s,\dot p) \Vdash \text{``}\varphi\text{ defines a filter''}$ so that either $$(s,\dot p) \Vdash \dot c \in \dot X $$ or $$(s,\dot p) \Vdash  \omega \setminus \dot c \in \dot X.$$
 
 But now notice that $c' = s \cup \{ (n,1 - m) : (n,m) \in c, n \geq \vert s \vert \}$ is also Cohen over $V$ with $s \subseteq c'$ (we identify $c$ as a subset of $\omega$ with its characteristic function). Moreover $V[c] = V[c']$ and thus $\dot{\mathbb{P}}[c]  = \dot{\mathbb{P}}[c']$. Let $p_0 := \dot p[c]$ and $p_1 :=\dot p[c']$. Working in $V[c]$ we find that $p_0,p_1 \in \mathbb{P}$, so there is an automorphism $\pi$ of $\mathbb{P}$ so that $\pi(p_1)$ is compatible to $p_0$. Let $H$ be $\mathbb{P}$-generic over $V[c]$ containing $p_0$ and $\pi(p_1)$. In either of the above cases, $V[c][H]\models \varphi(c,a,\bar\alpha)\wedge \varphi(c',a,\bar\alpha)$. This is a contradiction to $(s,\dot p) \Vdash \text{``}\varphi\text{ defines a filter''}$. 
\end{proof}

\begin{thm}
\label{thm:random}
 Let $r$ be a random real over $V$, $\mathbb{P}\in (\mathcal{P}_V)^{V[r]}$ and $G$ a $\mathbb{P}$-generic filter over $V[r]$. Then in $V[r][G]$, $r$ is splitting over any set of reals with the finite intersection property that is $\OD(V)$. 
\end{thm}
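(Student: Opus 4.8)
The plan is to imitate the proof of Theorem~\ref{thm:cohen}, but the combinatorial ``flip below a finite stem'' step has no literal analogue here: random conditions are positive--measure Borel sets, not basic clopen sets, so the ``flipped'' real need not stay inside the condition forcing the relevant statement. I would replace that step by a measure--theoretic argument based on Kolmogorov's $0$--$1$ law.

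First carry out the usual reductions. Passing to the filter generated by $X$ (still $\OD(V)$, and now proper since $X$ has the finite intersection property), we may assume $X$ is a proper $\OD(V)$ filter in $V[r][G]$, defined by some $\varphi(\cdot,a,\bar\alpha)$ with $a\in V$; and ``$r$ does not split $X$'' gives $r\in X$ or $\omega\setminus r\in X$. By weak homogeneity of $\mathbb P$ over $V[r]$, for a real $v\in V[r]$ the statement ``$v\in X$'' is decided by $\mathbb 1_{\mathbb P}$; hence $X\cap V[r]$ is a proper filter, is $\OD(V)$ in $V[r]$, contains whichever of $r,\omega\setminus r$ lies in $X$, and is the interpretation of a $\mathbb B$--name $\dot F$ built purely from parameters in $V$ (namely $\varphi$, $a$, $\bar\alpha$, and the $\OD(V)$--definition of $\mathbb P$), for which $\mathbb 1_{\mathbb B}\Vdash$ ``$\dot F$ is a proper filter'' (by weak homogeneity of $\mathbb B$, since this holds in $V[r]$). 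As $r$ is random over $V$, it suffices to show that the Boolean values $b=[[\dot r\in\dot F]]_{\mathbb B}$ and $S(b)=[[\,\omega\setminus\dot r\in\dot F\,]]_{\mathbb B}$ are both null, where $\dot r$ names the random real and $S$ is complementation.

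Now the heart of the argument. The measure--preserving homeomorphisms of $2^\omega$ given by complementation $S\colon x\mapsto\omega\setminus x$ and by the single--coordinate flips $F_n\colon x\mapsto x\,\triangle\,\{n\}$ all belong to $V$ and induce automorphisms of $\mathbb B$; each of them fixes the name $\dot F$, because $\dot F$ is defined from parameters in $V$ and its interpretation depends only on the model $V[G_{\mathbb B}]$, not on the generic $G_{\mathbb B}$ (this is where $\OD(V)$--ness and weak homogeneity of $\mathbb P$ are used). Hence $S(b)=[[\,\omega\setminus\dot r\in\dot F\,]]$, so $\mu(b)=\mu(S(b))$; moreover $b\wedge S(b)=[[\,\dot r\in\dot F\wedge\omega\setminus\dot r\in\dot F\,]]=0$ because $\dot F$ is forced to be a proper filter; therefore $\mu(b)\le\tfrac12$. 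On the other hand $F_n(b)=[[\,(\dot r\,\triangle\,\{n\})\in\dot F\,]]=b$, since $\dot r\,\triangle\,\{n\}$ differs from $\dot r$ in one coordinate and $\dot F$ is forced to be a filter containing the cofinite sets, hence closed under finite modifications. So $b$ is invariant under every $F_n$, i.e.\ $b$ lies in the tail $\sigma$--algebra of $2^\omega$; by Kolmogorov's $0$--$1$ law $\mu(b)\in\{0,1\}$, and with $\mu(b)\le\tfrac12$ this gives $\mu(b)=0$, and likewise $\mu(S(b))=0$. Since $r$ avoids all $V$--coded null Borel sets, neither $r$ nor $\omega\setminus r$ is in $X\cap V[r]$ --- contradicting the previous paragraph.

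The step I expect to be the crux is precisely this reinterpretation: seeing that the role of the Cohen bit--flip is played here by the observation that $[[\dot r\in\dot F]]$ is a tail event of measure at most $\tfrac12$, hence null. The supporting facts --- that $S$ and the $F_n$ fix $\dot F$, and that ``$\dot F$ is a proper filter'' is forced outright --- are where $\OD(V)$ and the weak homogeneity of $\mathbb P$ and $\mathbb B$ are genuinely needed and require a careful check; the remaining bookkeeping runs parallel to Theorem~\ref{thm:cohen}.
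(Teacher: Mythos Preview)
Your argument is correct, but it takes a different route from the paper's. The paper proceeds concretely: having fixed a positive-measure condition $B$ (coded in $V$) with $r\in B$ and $B\Vdash\dot r\in X$ (or the complementary case), it uses the Lebesgue density theorem to show that the $E_0$-saturation $\tilde B$ of $B$ has full measure; hence so does $C=\{\omega\setminus x:x\in\tilde B\}$, so $r\in C$, which produces a second random real $r'\in B$ with $r'=^*\omega\setminus r$, and then $r,r'\in X$ (resp.\ $\omega\setminus r,\omega\setminus r'\in X$) contradicts properness. You instead work at the level of the Boolean value $b=[[\dot r\in\dot F]]$: the coordinate-flip automorphisms $F_n$ fix the $\OD(V)$ name $\dot F$ and send $\dot r$ to a finite modification, so $b$ is $E_0$-invariant (equivalently, tail-measurable); the complementation automorphism gives $\mu(b)=\mu(S(b))$ with $b\wedge S(b)=0$, so $\mu(b)\le\tfrac12$; Kolmogorov's $0$--$1$ law then forces $\mu(b)=0$. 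The two arguments are close cousins---the density-theorem step is essentially a proof of the $0$--$1$ law for $E_0$-invariant sets---but your packaging is more Boolean-valued and bypasses the need to exhibit a specific $r'$, while the paper's version stays closer to the shape of the Cohen proof by literally producing the ``flipped'' random real inside the same condition. One small remark: what you directly establish is $F_n(b)=b$ in the measure algebra, i.e.\ invariance modulo null; this is exactly what ergodicity of the $\bigoplus_n\mathbb Z/2$-action (equivalently Kolmogorov) requires, so the conclusion $\mu(b)\in\{0,1\}$ is justified.
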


\begin{proof}
 Let us assume that $\mathbb{P}$ is simply the trivial forcing, since this part of the argument is essentially the same as in the last proof. As before we fix $X \in V[r]$ an $\OD(V)$ set with the finite intersection property and we assume that it is already a filter. 
 
 First note that any finite modification of $r$ is still a random real. Moreover, as complementation is a measure preserving homeomorphism of $2^\omega$, the complement of a random real is still random. Thus any $r' =^* \omega \setminus r$ is still random. 
 
 Now similarly as in the proof for Cohen forcing we find that there is Borel set $B$ of positive measure coded in $V$ so that $r \in B$ and $$B \Vdash \dot r \in X$$ or $$B \Vdash \omega \setminus \dot r \in X.$$ 
 
 Recall that for any Borel set $A$ of positive measure, its $E_0$ closure $\tilde A = \{ x \in 2^\omega : \exists y \in A ( x =^* y) \}$ has full measure. To see this Let $\varepsilon >0$ be arbitrarily small. Apply Lebesgue's density theorem to find a basic open set $[s] \subseteq 2^\omega$ so that $\frac{\mu(A \cap [s])}{\mu([s])} > 1-\varepsilon$. Follow from this that $\mu (\tilde A) > 1- \varepsilon $. 
 
 Now let $C := \{ \omega \setminus x : x \in \tilde B \}$. $C$ is coded in $V$ and has full measure. Thus we have that $r \in B \cap C$. By definition of $C$, there is $r' \in B$ so that $r' =^* \omega \setminus r$. Moreover $r'$ is also a random real over $V$ by our first remark. $r,r' \in X$ and $\omega \setminus r, \omega \setminus r' \in X$ are both contradictions to $X$ having the finite intersection property. 
\end{proof}

Recall that Silver forcing consists of partial functions $p \colon \omega \to 2$ so that $\omega \setminus \dom(p)$ is infinite.

\begin{thm}
\label{thm:silver}
 Let $s$ be a Silver real over $V$, $\mathbb{P}\in (\mathcal{P}_V)^{V[s]}$ and $G$ a $\mathbb{P}$-generic filter over $V[s]$. Then, in $V[s]$, there is a real splitting over any set of reals that is $\OD(V)$ in $V[s][G]$. 
\end{thm}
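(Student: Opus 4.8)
The plan is to follow the template of Theorems~\ref{thm:cohen} and~\ref{thm:random}, but to account for the fact that Silver forcing is \emph{not} weakly homogeneous, so the symmetry argument cannot be applied to a single Silver real directly. Instead I would exploit the product structure of Silver forcing together with a ``mutual genericity'' trick: two Silver reals, one of which is a finite modification of the complement of the other, can be made mutually generic-ish over $V$ in a way that still lands inside $V[s]$.

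First I would set things up as before. Let $\dot{\mathbb P}$ name the $\OD(V)$ weakly homogeneous poset in $V[s]$, and let $\dot X$ name an $\OD(V)$ set of reals (WLOG a filter, since the generated filter is again $\OD(V)$) in $V[s][G]$. Suppose toward a contradiction that no real in $V[s]$ splits $X$. Since $\OD(V)$ sets in $V[s][G]$ are forced by conditions whose Silver part is a finite restriction (as in the Cohen case, because the $\mathbb P$-part is handled by weak homogeneity of $\mathbb P$ exactly as in the proof of Theorem~\ref{thm:cohen}), there is $t \subseteq s$ with finite domain and a name $\dot p$ with $\dot p[s] \in G$ such that $(t,\dot p)$ forces ``$\varphi$ defines a filter'' and either $(t,\dot p) \Vdash \dot s \in \dot X$ or $(t,\dot p)\Vdash \omega\setminus\dot s \in \dot X$; here $\dot s$ names $s$ viewed as a subset of $\omega$. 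In particular $s \in X$ or $\omega\setminus s \in X$, and likewise for any other Silver real extending $t$.

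The key step: I would find, inside $V[s]$, a second Silver real $s'$ over $V$ with $s' =^* \omega\setminus s$ and such that $s$ is still ``compatible'' with the symmetry. Concretely, factor the Silver extension: $s$ restricted to an infinite coinfinite set $A \in V$ of its domain-complement can be absorbed, i.e.\ write $V[s] = V[s\restriction B][s\restriction (\omega\setminus B)]$ for a suitable partition — more robustly, use that if $s$ is Silver over $V$ then $s' := t \cup \{(n,1-s(n)) : n \geq \dom(t)\}$ is \emph{also} Silver over $V$ (Silver forcing is invariant under flipping values on a fixed coinfinite set, here $\omega\setminus\dom(t)$ — indeed flipping all values outside a finite set is an automorphism of the Silver tree), and $V[s] = V[s']$. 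Now work in $V[s] = V[s']$: apply weak homogeneity of $\mathbb P = \dot{\mathbb P}[s] = \dot{\mathbb P}[s']$ to $p_0 := \dot p[s]$ and $p_1 := \dot p[s']$ to get an automorphism $\pi$ with $\pi(p_1)$ compatible with $p_0$, pick $H$ generic over $V[s]$ through both, and conclude $V[s][H]\models\varphi(s)\wedge\varphi(s')$ (in whichever of the two cases we are in, using $s' =^* \omega\setminus s$ and that $\varphi$ defines a \emph{filter}, hence is closed under finite modifications of its complement's complement — i.e.\ $\omega\setminus s' =^* s \in X$ while also $s \in X$, contradicting FIP; the other case is symmetric). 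This contradicts $(t,\dot p)\Vdash$``$\varphi$ defines a filter'', and the splitting real we want is simply $s$ itself, which lives in $V[s]$ as required.

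The main obstacle I expect is justifying that flipping the values of $s$ outside the finite set $\dom(t)$ yields a \emph{Silver} real over $V$ (equivalently, that this map induces an automorphism of Silver forcing fixing the condition determined by $t$). This is where Silver differs from Cohen: the automorphism must preserve the ``coinfinite domain-complement'' structure, which value-flipping on a fixed set does — flipping $p(n)$ for all $n$ in the domain, leaving $\dom(p)$ and $\omega\setminus\dom(p)$ untouched — so it genuinely is an automorphism, and it fixes $t$ pointwise since we only flip above $\dom(t)$. Once that is in place, everything else is the same weak-homogeneity argument as in Theorem~\ref{thm:cohen}, now applied to the $\mathbb P$-coordinate while the Silver coordinate is handled by this explicit automorphism. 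A secondary point to check is the factorization claim that $\OD(V)$ sets in $V[s][G]$ are decided by conditions with finite Silver part and the appropriate $\mathbb P$-name — this follows because $\OD(V)$-ness depends only on $V$-parameters and ordinals, exactly as in the Cohen proof, so no new idea is needed there.
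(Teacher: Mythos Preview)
There is a genuine gap. Your argument hinges on the claim that the Silver condition $t$ deciding ``$\dot s \in \dot X$'' (or its negation) can be taken to have \emph{finite} domain. In Cohen forcing this is automatic because all conditions are finite, but Silver conditions are partial functions $p \colon \omega \to 2$ with $\omega \setminus \dom(p)$ infinite --- the domain itself may well be infinite, even cofinite. Weak homogeneity of Silver forcing (via value-flipping on a fixed set) does \emph{not} let you reduce to a finite condition here, because the statement ``$\dot s \in \dot X$'' involves the canonical name $\dot s$, and your flipping automorphisms do not fix $\dot s$. So in general the deciding condition $t$ has infinite domain, your flipped real $s' = t \cup \{(n, 1 - s(n)) : n \notin \dom(t)\}$ agrees with $s$ on the infinite set $\dom(t)$, and hence $s'$ is \emph{not} almost equal to $\omega \setminus s$. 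The contradiction never materializes.

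The paper gets around exactly this obstacle by choosing a different candidate for the splitting real: not $s$ itself, but the parity real $S_s = \{ n : |\{m < n : s(m) = 1\}| \text{ is even}\}$. The point is that flipping a \emph{single} bit of $s$ --- say at $n = \min(\omega \setminus \dom(p))$, which exists for any Silver condition $p$ --- flips all bits of $S_s$ above $n$, so $S_{s'} =^* \omega \setminus S_s$. Since one only needs one free coordinate (and $\omega \setminus \dom(p)$ is always infinite), no finiteness of the condition is required. Your template from the Cohen and random proofs is the right one; the missing idea is this parity trick to amplify a single-bit flip into a global one.
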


\begin{proof}

Again we only consider the case when $\mathbb{P}$ is trivial. Let $X \in V[s]$ be an $\OD(V)$ filter. Let $S_s = \{n \in \omega : \vert \{ m < n : s(m) = 1 \} \vert \text{ is even} \}$. As before assume $p \subseteq s$ is such that either $$p \Vdash  S_{\dot s} \in X$$ or $$p \Vdash  \omega \setminus S_{\dot s} \in X.$$

Let $n = \min (\omega \setminus \dom(p))$ and note that $s'$ defined by $s'(i) = s(i)$ for all $i \neq n$ and $s'(n) = 1- s(n)$ is also Silver and $p \subseteq s'$. But $S_{s'} =^* \omega \setminus S_{s}$. We get the same contradiction as in the last two proofs. 
\end{proof}

\begin{cor}
 Let $r \in 2^\omega$ and assume that there is a Cohen, a random or a Silver real over $L[r]$. Then there is no $\Delta^1_2(r)$ ultrafilter.
 
 In particular, the existence of a $\Delta^1_2(r)$ ultrafilter implies that $\omega_1 = \omega_1^{L[r]}$.
\end{cor}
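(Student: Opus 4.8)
The plan is to derive all three cases uniformly from Theorems~\ref{thm:cohen},~\ref{thm:random} and~\ref{thm:silver}, applied with the inner model $L[r]$ in the role of $V$ and with $\mathbb P$ and its generic $G$ trivial; the only extra ingredient is Shoenfield absoluteness. So suppose towards a contradiction that $\mathcal U$ is a $\Delta^1_2(r)$ ultrafilter on $\omega$, fix a $\Sigma^1_2$ formula $\psi$ with $x\in\mathcal U\leftrightarrow\psi(x,r)$, and let $e\in 2^\omega$ be a Cohen, random or Silver real over $L[r]$ as provided by the hypothesis. Put $M:=L[r][e]$, the forcing extension of $L[r]$ by $e$. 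Since $e$ is a real, $M=L[r\oplus e]$, so $M$ is an inner model of $V$ containing $r$ and $e$; hence by Shoenfield absoluteness every $\Sigma^1_2$ or $\Pi^1_2$ statement with parameters in $M$ is absolute between $M$ and $V$.

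The heart of the matter is the claim that $\mathcal U^+:=\{\,x\in M\cap[\omega]^\omega : M\models\psi(x,r)\,\}$ is an ultrafilter on $\omega$ \emph{in $M$} --- not merely a filter. By the absoluteness just noted, $\mathcal U^+=\mathcal U\cap M$. Therefore $\mathcal U^+$ is upward closed and closed under finite intersections inside $M$, contains no finite set (as $\mathcal U$ is proper), and --- since $\mathcal U$ is an ultrafilter and $M$ is closed under complements --- for every $x\in M\cap[\omega]^\omega$ one of $x,\omega\setminus x$ lies in $\mathcal U^+$. In particular $\mathcal U^+$ has the finite intersection property; and it is visibly $\OD(L[r])$ in $M$, being defined over $M$ from the parameter $r\in L[r]$ via the formula $\psi$.

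Now apply the relevant theorem with $V$ there replaced by $L[r]$, with the Cohen/random/Silver real $e$, and with $\mathbb P$ and $G$ trivial, so that ``$V[e][G]$'' is exactly $M$. In the Cohen and random cases the conclusion is that $e$ itself splits every member of $\mathcal U^+$ (an $\OD(L[r])$ family with the finite intersection property in $M$), and in the Silver case that some real $d\in M$ splits every member of $\mathcal U^+$; set $d:=e$ in the former cases. Since $\mathcal U^+$ is an ultrafilter on $\omega$ in $M$ and $d\subseteq\omega$ belongs to $M$, either $d\in\mathcal U^+$ or $\omega\setminus d\in\mathcal U^+$. But $d$ does not split $d$ (because $d\setminus d=\emptyset$) and does not split $\omega\setminus d$ (because $d\cap(\omega\setminus d)=\emptyset$) --- a contradiction, which proves that no $\Delta^1_2(r)$ ultrafilter exists. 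For the last sentence: if $\omega_1^{L[r]}<\omega_1$ then $L[r]$ has only countably many reals in $V$ (as $L[r]\models\ch$), so only countably many meager Borel sets have codes in $L[r]$, their union is meager, a Cohen real over $L[r]$ exists, and the first part applies.

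I expect the only step that genuinely needs care to be the passage from ``$\psi$ defines an ultrafilter in $V$'' to ``$\psi$ defines an ultrafilter in $M$'': if Shoenfield absoluteness only handed us a filter with the finite intersection property in $M$, then $\mathcal U^+$ would not be forced to decide $d$ and the contradiction would disappear. This is precisely where the hypothesis ``$\Delta^1_2$'' (rather than, say, $\Sigma^1_3$) is used --- through $\Sigma^1_2$/$\Pi^1_2$-absoluteness between $V$ and the inner model $M$ for parameters lying in $M$.
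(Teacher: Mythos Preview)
Your argument is correct and uses the same ingredients as the paper: the three splitting theorems applied over $L[r]$ together with Shoenfield absoluteness. The only difference is in how the contradiction is extracted. The paper merely observes that the set defined by $\psi(\cdot,r)$ has the finite intersection property in $L[r][e]$ (a $\Pi^1_2$ fact, hence downwards absolute), obtains a splitting real there, and then pushes the $\Sigma^1_3$ statement ``$\exists x\,\forall y\,(\neg\psi(y,r)\vee x\text{ splits }y)$'' upwards from $L[r][e]$ to $V$. You instead use full $\Sigma^1_2$-absoluteness between $V$ and the inner model $M=L[r][e]$ to see that $\mathcal U\cap M$ is actually an \emph{ultrafilter} in $M$, so the splitting real $d\in M$ already yields a contradiction inside $M$, with no need to compute the complexity of the splitting statement or to invoke $\Sigma^1_3$-upward absoluteness. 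Both routes are short; yours is arguably a touch cleaner, while the paper's makes clearer that only the $\Sigma^1_2$ side of the definition is really being used (the $\Pi^1_2$ side is automatic for ultrafilters anyway).
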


\begin{proof}
 Suppose that $\varphi$ is a $\Sigma^1_2(r)$ definition for an ultrafilter and that $c$ is a Cohen, random or Silver real over $L[r]$. In $L[r][c]$, the set defined by $\varphi$ will have the finite intersection property by downwards absoluteness. Thus by Theorem~\ref{thm:cohen}, \ref{thm:random} or \ref{thm:silver} respectively, $L[r][c] \models \exists x \in [\omega]^\omega \forall y \in [\omega]^\omega (\neg \varphi(y) \vee (\vert x \cap y \vert = \omega \wedge \vert x \cap \omega \setminus y \vert = \omega))$. This is a $\Sigma^1_3(x,c)$ statement, so by upwards Shoenfield absoluteness it holds true in $V \supseteq L[x][c]$. Thus $\varphi$ cannot define an ultrafilter in $V$. 
 
 The second part follows, since whenever $\omega_1^{L[r]} < \omega_1$, there is a Cohen real in $V$ over $L[r]$.  
\end{proof}

Another way of seeing the above for Cohen or random forcing is to use the classical result of Judah and Shelah (see \cite{Ihoda1989}), saying that the existence of a Cohen or random real over $L[r]$ is equivalent to every $\Delta^1_2(r)$ set having the Baire property or being Lebesgue measurable respectively.

\begin{cor}
 There is no $OD(\mathbb{R})$ ultrafilter, in particular no projective one, after adding $\omega_1$ many Cohen reals in a finite support iteration, random reals using a product of Lebesgue measure or Silver reals in a countable support iteration. 
\end{cor}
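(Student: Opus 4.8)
The plan is to reduce the three cases to the corresponding splitting theorems (Theorems~\ref{thm:cohen}, \ref{thm:random}, \ref{thm:silver}) combined with the observation that each of the listed iterations arranges for a suitable generic real to lie over $L[r]$ for \emph{every} real $r$ appearing in an intermediate model, and that the whole iteration can be viewed as a two-step extension $V = W[c][G]$ where $c$ is Cohen/random/Silver over $W$ and $\mathbb{P}$ (the tail) is weakly homogeneous and $\OD(W)$. First I would recall that each of the three iterations is weakly homogeneous: finite support products/iterations of Cohen forcing, the measure-algebra product giving random reals, and the countable support iteration of Silver forcing are all (well-known to be) weakly homogeneous, and moreover each is definable without parameters, hence $\OD(V)$ in the ground model — here the ground model is $L$, so ``$\OD(V)$'' will in fact be ``$\OD$ of the ground model'' and there are no real parameters to worry about.

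Next, suppose toward a contradiction that in the final extension $N$ there is an $\OD(\mathbb{R})$ ultrafilter $\mathcal{U}$, defined by a formula with a real parameter $p \in N$ (and ordinals). Since the iteration has length $\omega_1$ with small supports, $p$ appears at some countable stage; pick $\alpha < \omega_1$ so that $p$ is in the $\alpha$-th intermediate model $N_\alpha$, and factor $N = N_\alpha[c][G]$ where $c$ is the generic real added at stage $\alpha$ (Cohen, random, or Silver over $N_\alpha$) and $G$ is generic for the tail $\mathbb{P}$, which is weakly homogeneous and $\OD(N_\alpha)$ in $N_\alpha[c]$. The ultrafilter $\mathcal{U}$, as a set of reals defined from $p$ and ordinals, is then $\OD(N_\alpha)$ in $N_\alpha[c][G]$ — here one uses that $p \in N_\alpha$, so $\mathcal{U}$ is definable from a parameter in $N_\alpha$. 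Being an ultrafilter, $\mathcal{U}$ in particular has the finite intersection property, so Theorem~\ref{thm:cohen}, \ref{thm:random} or \ref{thm:silver} (applied with $V := N_\alpha$) tells us that $c$ (or, in the Silver case, some real in $N_\alpha[c]$) splits $\mathcal{U}$ — contradicting that $\mathcal{U}$ is an ultrafilter. Since projective sets are $\OD(\mathbb{R})$, the ``in particular'' follows at once.

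The main obstacle, and the only step requiring real care, is the bookkeeping that lets us factor the $\omega_1$-stage iteration as a two-step $N_\alpha[c][G]$ with the tail \emph{both} weakly homogeneous \emph{and} $\OD(N_\alpha)$ (equivalently, lightface $\OD$ over $N_\alpha$, since no parameters are needed once the ground model is fixed). For finite support Cohen this is immediate since $\mathrm{Cohen}_{\omega_1} \cong \mathrm{Cohen}_1 \times \mathrm{Cohen}_{\omega_1}$ and the tail is just $\mathrm{Cohen}_{\omega_1}$ again, manifestly weakly homogeneous and parameter-free definable; for the random product one uses that the measure algebra on $2^{\omega_1}$ factors as the measure algebra on a single coordinate times the rest, again weakly homogeneous and definable; for countable support Silver iteration one must quote (or observe) that the tail iteration is weakly homogeneous and that its definition uses only the ground model, which is the slightly more delicate point but is standard for CS iterations of definable weakly homogeneous forcings. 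Once this factorization is in place, everything else is a direct invocation of the three theorems already proved, plus the trivial remark that an ultrafilter has the finite intersection property and cannot be split.
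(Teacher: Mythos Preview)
Your proposal is correct and follows the same overall strategy as the paper: absorb the defining real parameter into an intermediate model, factor the iteration so that a fresh generic real sits over that intermediate model with a weakly homogeneous, $\OD$ tail, and then invoke the relevant splitting theorem. Two small remarks. First, the corollary does not assume the ground model is $L$; the paper works over an arbitrary ground model $V$, and the forcings in question are parameter-free definable over any $V$, so your references to $L$ and $L[r]$ should simply be to the ground model and the intermediate models. Second, for the Cohen and random cases the paper uses a slightly slicker factorization than yours: rather than cutting at a stage $\alpha$ and carrying a nontrivial tail, it exploits the product structure to find a single coordinate $\xi$ such that the parameter $a$ already lies in $V[\langle c_\alpha : \alpha \in \omega_1 \setminus \{\xi\}\rangle]$, and then the ``tail'' $\mathbb{P}$ is trivial. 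Your uniform factorization (one generic followed by a weakly homogeneous tail) works just as well and has the advantage of treating all three cases identically; the paper's version avoids having to check weak homogeneity of the Cohen/random tail but then has to argue separately for Silver exactly as you do.
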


\begin{proof}
 Let $\langle c_\alpha : \alpha < \omega_1 \rangle$ be Cohen reals added via a finite support iteration over a ground model $V$ and suppose that in $V[\langle c_\alpha : \alpha < \omega_1 \rangle]$ there is an ultrafilter $\mathcal{U}$ definable from a real $a$ and ordinals. It is well known that there is $\xi < \omega_1$ so that $a \in V[\langle c_\alpha : \alpha \in \omega_1 \setminus \{\xi \} \rangle]$. But then, by Theorem~\ref{thm:cohen}, $c_\xi$ is splitting over $\mathcal{U}$, since $V[\langle c_\alpha : \alpha < \omega_1 \rangle] = V[\langle c_\alpha : \alpha \in \omega_1 \setminus \{\xi \} \rangle][c_\xi]$. 
 
 The argument for random reals is essentially the same. 
 
 Let $\langle \mathbb{P}_\alpha,\dot{ \mathbb{Q}}_\alpha : \alpha \leq \omega_1 \rangle$ be the $\omega_1$-length countable support iteration of Silver forcing. Any real $a$ appears in $V^{\mathbb{P}_\xi}$ for some $\xi < \omega_1$. But now note that $\mathbb{P}_{\omega_1}$ is $\OD(V)$ and weakly homogeneous. Moreover, $\mathbb{P}_{\omega_1} \cong \mathbb{P}_{\xi} * \dot{\mathbb{P}}_{\omega_1}$. Thus applying Theorem~\ref{thm:silver}, we find that there is no ultrafilter definable from parameters in $V^{\mathbb{P}_\xi}$ over $V^{\mathbb{P}_{\omega_1}}$. In particular there is no $\OD(\{a\})$ ultrafilter in $V^{\mathbb{P}_{\omega_1}}$.  
\end{proof}

\section{The Borel ultrafilter number}

The ultrafilter number $\mathfrak{u}$ is the least size of a base for an ultrafilter. As with mad families (see \cite{Raghavan2012}) and maximal independent families (see \cite{Brendle2019}) it makes sense to introduce a Borel version of the ultrafilter number that is closely related to the definability of ultrafilters. 

\begin{definition}
 The Borel ultrafilter number is defined as $$\mathfrak{u}_B := \min\{ \vert \mathcal{B} \vert : \mathcal{B} \subseteq \mathbf{\Delta}^1_1, \bigcup \mathcal{B} \text{ is an ultrafilter} \}.$$
\end{definition}

Note that $\aleph_1 \leq \mathfrak{u}_B$, as a countable union of Borel sets is Borel.

\begin{remark}
 Let $\mathfrak{u}'_B = \min\{ \vert \mathcal{B} \vert : \mathcal{B} \subseteq \mathbf{\Delta}^1_1, \bigcup \mathcal{B} \text{ is an ultrafilter base} \}$ and $\mathfrak{u}''_B = \min\{ \vert \mathcal{B} \vert : \mathcal{B} \subseteq \mathbf{\Delta}^1_1, \bigcup \mathcal{B} \text{ generates an ultrafilter} \}$. Then $\mathfrak{u}''_B =\mathfrak{u}'_B = \mathfrak{u}_B$.
\end{remark}

\begin{proof}
 Obviously, $\mathfrak{u}''_B \leq \mathfrak{u}'_B \leq \mathfrak{u}_{B}$. Remember that whenever $B$ is Borel, then the filter $F_B$ that it generates is analytic. Thus $\mathfrak{u}''_B$ is uncountable as well. Now let $\mathcal{B}$ be a collection of Borel sets, whose union generates an ultrafilter. We may assume that $\mathcal{B}$ is closed under finite unions. For every $B \in \mathcal{B}$, let $F_B$ be the filter generated by $B$. Since $F_B$ is analytic, we can write it as an $\omega_1$-union $F_B = \bigcup_{\alpha < \omega_1} F_B^\alpha$ of Borel sets. Now consider $\{ F_B^\alpha : B \in \mathcal{B}, \alpha < \omega_1 \}$. It has the same size as $\mathcal{B}$ and is a witness for $\mathfrak{u}_B$. 
\end{proof}

Any coanalytic set is an $\omega_1$-union of Borel sets. Thus the existence of a coanalytic ultrafilter base implies that $\mathfrak{u}_B = \aleph_1$.

\begin{thm}
 $\cov (\mathcal{M}),\cov (\mathcal{N}), \mathfrak{b}\leq \mathfrak{u}_B \leq \mathfrak{u}$.
\end{thm}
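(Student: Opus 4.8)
The plan is to prove $\mathfrak{u}_B \leq \mathfrak{u}$ directly and then establish the three lower bounds; all three rest on two ingredients: that the filter generated by a single Borel set is analytic and hence ``small'' (meager, Lebesgue null, Talagrand-feeble), and that an ultrafilter $\mathcal{U}$ is self-complementary, i.e.\ $\mathcal{U} \sqcup c[\mathcal{U}] = 2^\omega$ where $c\colon A \mapsto \omega\setminus A$. For $\mathfrak{u}_B \leq \mathfrak{u}$, fix an ultrafilter base $\mathcal{B}$ with $\vert \mathcal{B}\vert = \mathfrak{u}$; each singleton $\{b\}$, $b \in \mathcal{B}$, is closed, hence $\mathbf{\Delta}^1_1$, and $\bigcup_{b\in\mathcal{B}}\{b\} = \mathcal{B}$ is an ultrafilter base, so by the Remark ($\mathfrak{u}'_B = \mathfrak{u}_B$) we get $\mathfrak{u}_B \leq \mathfrak{u}$.

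For the covering numbers, let $\mathcal{B}\subseteq \mathbf{\Delta}^1_1$ be a family with $\mathcal{U} := \bigcup\mathcal{B}$ an ultrafilter. Each $B \in \mathcal{B}$ is contained in the free filter $F_B$ that it generates together with the cofinite sets; $F_B$ is analytic --- hence has the Baire property and is measurable --- and $E_0$-invariant, being a free filter, so by the $0$--$1$ laws (equivalently, by the classical theorems that free filters with the Baire property are meager and measurable free filters are null) $F_B$ is both meager and null. Fix a Borel meager $M_B \supseteq B$ and a Borel null $N_B \supseteq B$. Since $c$ is a measure-preserving homeomorphism of $2^\omega$ and $\mathcal{U} \sqcup c[\mathcal{U}] = 2^\omega$ (as $\mathcal{U}$ is an ultrafilter), the family $\{M_B \cup c[M_B] : B \in \mathcal{B}\}$ covers $2^\omega$ by $\vert\mathcal{B}\vert$ meager sets and $\{N_B \cup c[N_B] : B \in \mathcal{B}\}$ covers $2^\omega$ by $\vert\mathcal{B}\vert$ null sets. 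As $2^\omega$ is neither meager nor null, $\vert\mathcal{B}\vert \geq \cov(\mathcal{M})$ and $\vert\mathcal{B}\vert \geq \cov(\mathcal{N})$.

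For the bounding number, let again $\mathcal{B}\subseteq\mathbf{\Delta}^1_1$ with $\mathcal{U} = \bigcup\mathcal{B}$ an ultrafilter, and suppose towards a contradiction that $\vert\mathcal{B}\vert < \mathfrak{b}$. By Talagrand's characterization of meager filters, for each $B \in \mathcal{B}$ there is an interval partition $\langle I^B_k : k \in \omega\rangle$ of $\omega$ such that every $A \in F_B$ meets all but finitely many $I^B_k$; code it by the increasing $h_B \in \omega^\omega$ with $h_B(n) = \max I^B_{k+1} + 1$ whenever $n \in I^B_k$. Fix an increasing $g \in \omega^\omega$ with $h_B \leq^* g$ for all $B \in \mathcal{B}$, and set $j_0 = 0$, $j_{m+1} = g(j_m)$, $J_m = [j_m, j_{m+1})$. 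For fixed $B$ and all large $m$, if $j_m \in I^B_k$ then $I^B_{k+1} \subseteq (j_m, h_B(j_m)) \subseteq J_m$, and these blocks are pairwise distinct; hence every $A \in F_B$, in particular every $A \in B$, meets all but finitely many $J_m$, so every $A \in \mathcal{U}$ does. But $\mathcal{U}$ is an ultrafilter, so it contains one of $\bigcup_m J_{2m}$, $\bigcup_m J_{2m+1}$, each of which misses infinitely many $J_m$ --- a contradiction, so $\vert\mathcal{B}\vert \geq \mathfrak{b}$. I expect this last argument to be the main obstacle: it requires recalling Talagrand's interval-partition characterization of meager filters and verifying that a single $\leq^*$-dominating function simultaneously coarsens all the partitions $\langle I^B_k\rangle$ into a common $\langle J_m\rangle$ that still traps every member of every $F_B$; the other inequalities reduce to the classical smallness of analytic free filters together with the self-complementarity of ultrafilters.
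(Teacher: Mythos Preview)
Your proof is correct. For $\mathfrak{u}_B \leq \mathfrak{u}$ and for $\mathfrak{b} \leq \mathfrak{u}_B$ you do essentially what the paper does (the paper leaves $\mathfrak{u}_B \leq \mathfrak{u}$ implicit, and for $\mathfrak{b}$ it uses the same Talagrand interval-partition argument, with the dominating function iterated from $0$ to produce the blocks).

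For $\cov(\mathcal{M}),\cov(\mathcal{N}) \leq \mathfrak{u}_B$ your route is genuinely different. The paper argues via generic reals: given fewer than $\cov(\mathcal{M})$ Borel pieces, it places their codes into an elementary submodel $M \preccurlyeq H(\theta)$ of size $<\cov(\mathcal{M})$, picks a Cohen real $c$ over $M$, and uses absoluteness to see that $c$ splits every $B\in\mathcal{B}$ in $V$; the random case is analogous. You instead exploit directly that each generated free filter $F_B$ is analytic and $E_0$-invariant, hence meager and null by the $0$--$1$ laws, and then use $\mathcal{U}\cup c[\mathcal{U}]=2^\omega$ to turn the $F_B$'s into a covering family. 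Your argument is more elementary and self-contained (no elementary submodels or forcing absoluteness), while the paper's version ties the inequality back to its Section~6 results on Cohen and random reals destroying $\OD(V)$ filters. Both are short; yours has the advantage of making the role of self-complementarity explicit, whereas the paper's makes the connection to splitting reals visible.
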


\begin{proof}
 Let $\mathcal{B}$ be a collection of $< \cov(\mathcal{M}) $ many Borel sets and assume that $\bigcup \mathcal{B}$ has the finite intersection property. Let $M \preccurlyeq H(\theta)$ for some large $\theta$, so that $\vert M \vert < \cov(\mathcal{M})$ and $\mathcal{B} \subseteq M$. Then there is a Cohen real $c$ over $M$. But then in $M[c]$, $c$ is splitting over every $B \in \mathcal{B}$. Moreover in $V$ it is true that $c$ is splitting over $B$, by $\Sigma_1$-upwards-absoluteness. Thus $c$ is splitting over $\bigcup \mathcal{B}$ which cannot be an ultrafilter. 
 The argument for random forcing is exactly the same. 
 
 For $\mathfrak{b} \leq \mathfrak{u}_B$, note that any Borel filter is meager. By a classical result of Talagrand (see \cite{Talagrand1980}), meager filters $\mathcal{F}$ are exactly those for which there is $f \in \omega^\omega$ so that $\forall x \in \mathcal{F} \forall^\infty n \in \omega (x \cap [n,f(n)) \neq \emptyset)$. For $\mathcal{B}$ a collection of Borel filters, we let $f_B$ be such a function for every $B \in \mathcal{B}$. If $\mathcal{B}$ has size smaller than $\mathfrak{b}$, then there is a single function $f \in \omega^\omega$ so that $f_B <^* f$ for each $B \in \mathcal{B}$. Now note that $x_0 \cup x_1 = \omega$, where $x_0 := \bigcup_{n \in \omega} [f^{2n}(0),f^{2n+1}(0))$ and $x_1 := \bigcup_{n \in \omega} [f^{2n+1}(0),f^{2n+2}(0))$. But neither $x_0$ nor $x_1$ can be in $\bigcup \mathcal{B}$. 
\end{proof}

\begin{quest}
 Is it consistent that $\mathfrak{u}_B < \mathfrak{u}$? Is it consistent that there is a $\Pi^1_1$ ultrafilter base while $\aleph_1 < \mathfrak{u}$? 
\end{quest}

\end{document}